\newtheorem{assumption}{Assumption}
\def\diag{\mbox{diag}}
\begin{document}

\title{BASIC: Bipartite Assisted Spectral-clustering for Identifying Communities in Large-scale Networks}

\author{\name Tianchen Gao \email gaotianchen@stu.xmu.edu.cn\\
       \addr Department of Statistics and Data Science, \\
       School of Economics, Xiamen University, \\
       Xiamen, Fujian 361102, China
       \AND
       \name Jingyuan Liu\thanks{corresponding author} \email jingyuan@xmu.edu.cn \\
       \addr  MOE Key Laboratory of Econometrics,\\ Department of Statistics and Data Science, School of Economics, \\Wang Yanan Institute for Studies in Economics,\\ Fujian Key Lab of Statistics, Xiamen University \\ Xiamen, Fujian 361102, China
       \AND
       \name Rui Pan \email ruipan@cufe.edu.cn \\
       \addr  School of Statistics and Mathematics, \\ Central University of Finance and Economics \\ Beijing, Beijing 100081, China
       \AND
       \name Ao Sun \email ao.sun@marshall.usc.edu \\
       \addr  Data Sciences and Operations Department, \\
       Marshall School of Business,
University of Southern California,\\
Los Angeles, CA 90089, USA
       }

\editor{My editor}

\maketitle

\begin{abstract}
Community detection, which focuses on recovering the group structure within networks, is a crucial and fundamental task in network analysis. However, the detection process can be quite challenging and unstable when community signals are weak. Motivated by a newly collected large-scale academic network dataset from the {\it Web of Science}, which includes multi-layer network information, we propose a Bipartite Assisted Spectral-clustering approach for Identifying Communities (BASIC), which incorporates the bipartite network information into the community structure learning of the primary network. The accuracy and stability enhancement of BASIC is validated theoretically on the basis of the degree-corrected stochastic block model framework, as well as numerically through extensive simulation studies. We rigorously study the convergence rate of BASIC even under weak signal scenarios and prove that BASIC yields a tighter upper error bound than that based on the primary network information alone. We utilize the proposed BASIC method to analyze the newly collected large-scale academic network dataset from statistical papers. During the author collaboration network structure learning, we incorporate the bipartite network information from author-paper, author-institution, and author-region relationships. From both statistical and interpretative perspectives, these bipartite networks greatly aid in identifying communities within the primary collaboration network. 
\end{abstract}

\begin{keywords}
Community Detection, Spectral Clustering, Bipartite Network, Weak Signal, Collaboration Network. 
\end{keywords}

\section{Introduction}

In recent years, network analysis, which focuses primarily on depicting intricate relationships and interactions between entities in complex systems, has attracted substantial interest in various disciplines, such as physics \citep{newman2008physics}, biology \citep{dong2023idopnetwork}, sociology (\citealp{newman2001scientific}; \citealp{ji2022co}), transportation science \citep{tian2016analysis}, among many other fields \citep{wu2023metabolomic,miething2016friendship}. An extensively used modeling strategy for network analysis is the stochastic block model (SBM) where the edge probabilities depend only on the communities to which the correponding nodes belong \citep{holland1983stochastic} and its variants, such as degree-corrected block model (DCBM) \citep{karrer2011stochastic}, mixed membership stochastic block model (MMSB) \citep{airoldi2008mixed, zhang2020detecting}, degree-corrected mixed membership model (DCMM) \citep{jin2017estimating}, bipartite stochastic block model (BiSBM) \citep{larremore2014efficiently, yen2020community}, superimposed stochastic block model (SupSBM) \citep{huang2020efficient, paul2023higher}, motif tensor block model (MoTBM) \citep{yu2024network}, and so forth. 

During network analysis, community detection is a crucial tool that focuses on identifying closely connected groups within networks \citep{girvan2002community,newman2012communities,jin2015fast}. \cite{jin2015fast} proposed a spectral clustering method on ratios-of-eigenvectors (SCORE) for DCBM that utilizes eigenvectors to correct for degree heterogeneity in community detection via spectral clustering. \cite{ji2016coauthorship} and \cite{ji2022co} further extended SCORE to directed-DCBM and DCMM. Recently, \cite{xu2023covariate} introduced an additional layer of nodal covariates to the adjacency matrix, in order to improve accuracy of clustering. \cite{paul2023higher} defined motif adjacency matrices and proposed a higher-order spectral clustering method for SupSBM.

However, community detection can be quite challenging when the signal-to-noise ratio of the network structure is weak, where the probability that the edges within certain communities are close to that between communities; thus, community structures might be masked by noises. Many practical networks, including the collaboration network studied in this work, are typical weak-signal networks. To address weak signals, \cite{Jin_2021} proposed a SCORE+ method that applies pre-PCA normalization and Laplacian transformation to the adjacency matrix and considered an additional eigenvector for clustering. Furthermore, \cite{jin2023optimal} suggested estimating the number of communities using a stepwise goodness-of-fit test for the adjacency matrix. These methods mainly focus on extracting key information from the network itself to address the challenges posed by weak signals.

In real practice, in addition to the single network of primary interest (referred to as the {\it primary network} hereafter), there may be additional bipartite networks available for analysis. In this study, we collect and construct a multi-layer academic network. The primary network of interest is the collaboration network, which is constructed on the basis of collaborative relationships among authors. Additionally, we also obtain the author-paper network through publication linkages, as well as author-institution and author-region networks through affiliation associations. These bipartite networks contain valuable information about author communities from diverse perspectives. Figure~\ref{fig:Target_Source} presents part of these networks mentioned above. 
Then, taking into account the institutional and regional affiliations between authors can be fairly helpful for learning the structural information of author collaboration. Intuitively, researchers from the same institution are more likely to collaborate and thus belong to the same community. Consequently, the issue brought about by weak signals in collaboration networks can be effectively addressed by incorporating additional information about community structure. 

\begin{figure}[!ht]
    \centering
    \includegraphics[width=0.9\textwidth]{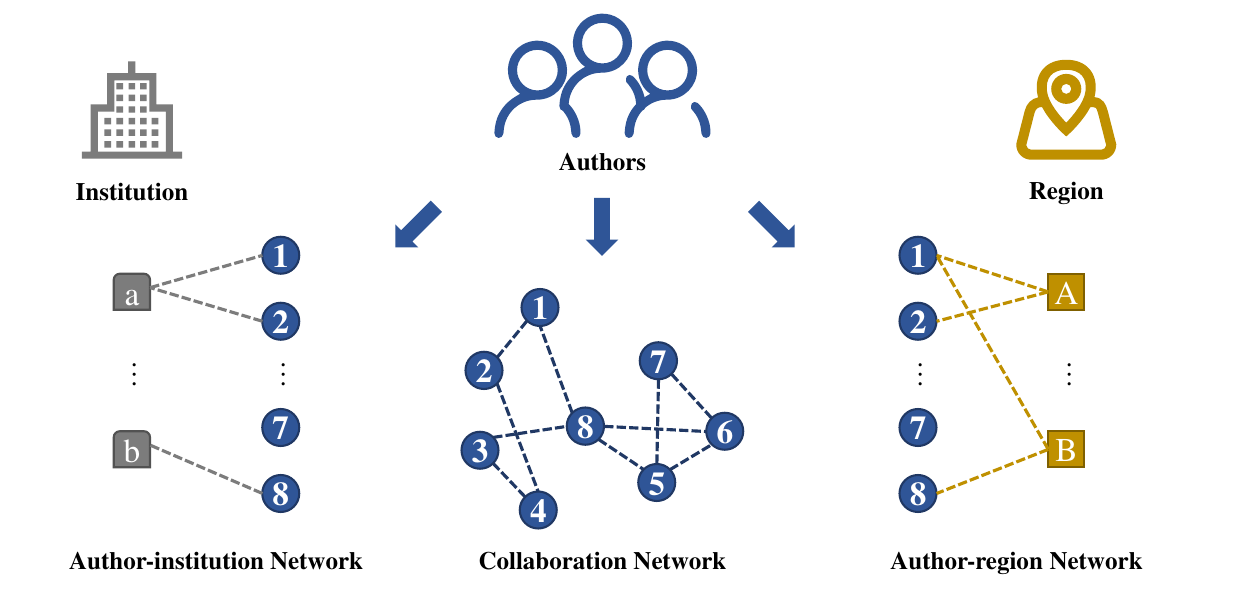}
    \caption{Schematic diagram of the collaboration network, author-institution network, and author-region network. The structural information of author nodes within these bipartite networks can be leveraged to aid in detecting communities within the collaboration network.}
    \label{fig:Target_Source}
\end{figure}

To this end, we step outside the framework that relies only on the primary network. We propose an algorithm called {\it Bipartite Assisted Spectral-clustering for Identifying Communities (BASIC)} under DCBM and its bipartite modification BiDCBM. We handily integrate the adjacency matrices of both the primary and bipartite networks, then conduct an eigenvalue decomposition toward the aggregated matrix, and apply the SCORE normalization to obtain a new ratio matrix. This crucial step automatically corrects for degree heterogeneity. Finally, we perform clustering on the ratio matrix to obtain the community labels. Using the side information from bipartite networks, the learning performance of the community structures of the primary network can be substantially improved. 

The contributions of this paper are threefold. 
First, to the best of our knowledge, this is the first study to leverage bipartite network information to improve community detection performance. In contrast, existing research focuses primarily on extracting additional insights from the primary network itself, such as nodal covariates or higher-order structures. It is important to note that the proposed BASIC can readily be integrated with existing methods to further tackle weak network signals. It can also be extended to other network models and community detection approaches.
Second, we rigorously establish the theoretical guarantee of BASIC by deriving its convergence rate even under extremely weak signals of the primary network. More importantly, we prove that BASIC has a tighter upper bound of the number of error nodes compared to merely using the primary network for community detection, and hence indeed enhances detection power. Third, we collect a large-scale academic network dataset, including collaboration network, author-paper, author-institution, and author-region networks. These datasets serve as valuable resources for investigating community structures in collaborative networks, leading to numerous intriguing and meaningful findings via the proposed BASIC method. 

This section ends with some notation. For a vector $\mathbf{v}$ and a fixed integer $q > 0$, $\mathbf{v}(i)$ or $v_i$ stands for its $i$-th component, $\|\mathbf{v}\|_q$ represents its $\ell_q$ norm; when $q = 2$, we simplify it as $\|\mathbf{v}\|$. When referring to a matrix $\mathbf{M}$, $\mathbf{M}(i, j)$ denotes its $(i,j)$-th entry, $\mathbf{M}_{\bar{i}}$ represents its $i$-th row, $\mathbf{M}_{i \sim j}$ refers to the sub-matrix by removing the $i$-th to $j$-th columns of $\mathbf{M}$. Furthermore, $\|\mathbf{M}\|_F$ represents its Frobenius norm, defined as the square root of the sum of the squares of all its elements. Define $\sigma_{\max}(\mathbf{M})$, $\sigma_{\min}(\mathbf{M})$ and $\sigma_i(\mathbf{M})$ as the largest, smallest, and $i$-th leading absolute singular value of $\mathbf{M}$, respectively. If $\mathbf{M}$ is a square matrix, denote $\lambda_{\max}(\mathbf{M})$, $\lambda_{\min}(\mathbf{M})$ and $\lambda_i(\mathbf{M})$ to be its largest, smallest, and $i$-th leading eigenvalue in absolute values. If further $\mathbf{M}$ is symmetric, $\|\mathbf{M}\|_{op}$ represents its operator norm which equals the largest absolute value of its eigenvalues. 
For a set $\mathcal{V}$, $|\mathcal{V}|$ denotes its cardinality. When we have two positive sequences $\left\{a_n\right\}_{n=1}^{\infty}$ and $\left\{b_n\right\}_{n=1}^{\infty}$, use $a_n \asymp b_n$ to indicate the existence of a constant $C$ such that for large enough $n$, $b_n / C \leq a_n \leq C b_n$, i.e.,  $a_n$ and $b_n$ are of the same order. Denote $ a_n \gg b_n $ if ${a_n}/{b_n} \to \infty$. Finally, define $\mathbb{I}(A)$ to be the indicator function of event $A$. 

\section{Methodology of BASIC}
\subsection{DCBM and BiDCBM for primary and bipartite networks}\label{model-section}

Denote $\mathcal{G}_{t}=(\mathcal{V}_{t}, \mathcal{E}_{t})$ to be the primary network of interest, where $\mathcal{V}_{t}$ and $\mathcal{E}_{t}$ represent the set of primary nodes and edges, respectively. That is, $\mathcal{V}_{t}=\left\{1, \ldots, n\right\}$, with $n$ being the number of primary nodes in $\mathcal{G}_{t}$, and $\mathcal{E}_{t}$ collects the node pairs $(i, j)$ if node $i$ and node $j$ are connected. Define the adjacency matrix of $\mathcal{G}_{t}$ as $\mathbf{A}=\left(\mathbf{A}{(i_1,i_2)}\right) \in \mathbb{R}^{n \times n}$, where $i_1,i_2 =1, \ldots,n$, that is, $\mathbf{A}{(i_1,i_2)}=\mathbb{I}((i_1,i_2) \in \mathcal{E}_{t})$.
Without loss of generality, set $\mathbf{A}{(i,i)}=0$ for $i = 1, \ldots,n$, indicating the absence of self-connections, although the proposed BASIC method can be easily extended to self-connected networks. We utilize the DCBM as the generating model for $\mathcal{G}_{t}$, so that the probability of an edge between two nodes depends only on the communities to which they belong and the degree heterogeneity parameters. Assume that the primary nodes are partitioned into $K$ distinct communities, where $K$ is fixed. For $k=1, \ldots, K$, denote $\mathcal{V}_{t}^{(k)}$, with cardinality $n_k=|\mathcal{V}_{t}^{(k)}|$, as the set of node indices in the $k$-th community such that $\mathcal{V}_{t} = \bigcup_{k=1}^{K} \mathcal{V}_{t}^{(k)}$ and $\mathcal{V}_{t}^{(k_1)} \cap \mathcal{V}_{t}^{(k_2)} = \emptyset$ for $k_1 \neq k_2$, hence $\sum_{k=1}^K n_k=n$. Let $\boldsymbol{l}= \left(l_1, \ldots, l_n\right)^{\top}$ be the vector of node membership labels, whose elements take values in $\{1, \ldots, K\}$. The DCBM assumes that the probability of an edge between two nodes is $\mathbf{P}\left(\mathbf{A}{(i_1,i_2)}=1\right)={\theta}_{i_1} {\theta}_{i_2} \mathbf{E}{(l_{i_1},l_{i_2})}$, where $\mathbf{E} \in \mathbb{R}^{K \times K}$ is a symmetric probability transition matrix between communities with elements in $[0,1]$, and $0\leq {\theta}_{i}\leq 1$ is the degree heterogeneity parameter associated with primary node $i$. A node with a larger ${\theta}_{i}$ value is more likely to connect with other nodes.

In addition, consider $Q$ bipartite networks that are related to the primary network $\mathcal{G}_{t}$, denoted as $\mathcal{G}^{(q)}_{t,s}=(\mathcal{V}_{t}, \mathcal{V}^{(q)}_{s}, \mathcal{E}^{(q)}_{t,s})$ with $q=1,\ldots,Q$. Specifically, the target nodes set $\mathcal{V}_{t}$ still refers to the set of nodes from the original primary network, while the bipartite nodes set $\mathcal{V}^{(q)}_{s}$ represents the $q$-th set of bipartite nodes. We write $\mathcal{V}_{s}^{(q)}=\left\{1, \ldots, m^{(q)}\right\}$, where $m^{(q)}$ is the number of bipartite nodes in $q$-th bipartite network. Additionally, $\mathcal{E}^{(q)}_{t,s}$ collects the edges in $q$-th bipartite network, which varies among different bipartite networks. 
For a specific bipartite network $\mathcal{G}_{t,s}^{(q)}$, we can construct a bipartite adjacency matrix $\mathbf{B}^{(q)}=\left(\mathbf{B}^{(q)}{(i,j)}\right) \in \mathbb{R}^{n \times {m^{(q)}}}$, where $\mathbf{B}^{(q)}{(i,j)}=\mathbb{I}((i,j) \in \mathcal{E}^{(q)}_{t,s})$, for $i = 1, \ldots, n$ and $j = 1, \ldots, m^{(q)}$. We assume that the bipartite nodes can be partitioned into $K^{(q)}$ distinct communities. Let $\boldsymbol{r}^{(q)}=\left(r_1^{(q)}, \ldots, r_m^{(q)}\right)^{\top}$ be the corresponding membership label vector, where $r_j^{(q)}$ takes values in $\{1, \ldots, K^{(q)}\}$.
To characterize the generative mechanism of bipartite networks, we modify the BiSBM \citep{yen2020community} by introducing the degree heterogeneity parameters, and the resulting model is named bipartite degree-corrected block model (BiDCBM). Thus, the probability of an edge between two nodes is $\mathbf{P}\left(\mathbf{B}^{(q)}{(i,j)}=1\right)={\theta}_i {\delta}^{(q)}_j \mathbf{F}^{(q)}{(l_i,r^{(q)}_j)}$, where $\mathbf{F}^{(q)} \in \mathbb{R}^{K \times K^{(q)}}$ is an asymmetric probability transition matrix whose elements are in $[0,1]$, and $0\leq {\theta}_{i}, {\delta}^{(q)}_{j} \leq 1$ represent the degree heterogeneity parameters associated with primary node $i$ and bipartite node $j$ in $q$-th bipartite network, respectively.

\subsection{BASIC: Bipartite Assisted Spectral-clustering for Identifying Communities}

Based on the primary network $\mathcal{G}_{t}$ and $Q$ bipartite networks defined in section \ref{model-section}, the target is to identify the community structure of primary network, hence, it is important to find an efficient way to aggregate information from the primary and all bipartite networks, even if its signal strength is weak. Solving this problem is not straightforward. For instance, how to tackle the different dimensions among the primary and all bipartite networks? How to extract the community structure information of the bipartite nodes? How to effectively integrate the bipartite information while avoiding the ``negative knowledge transfer"? Facing these challenges, we propose a Bipartite Assisted Spectral-clustering method for Identifying Communities, abbreviated as BASIC, by constructing an {\it aggregated square matrix}
\begin{equation}\label{aggregation_frame}
\mathbf{M} = \mathbf{A} \mathbf{A}^{\top} + \sum_{q=1}^Q \mathbf{B}^{(q)} \mathbf{B}^{(q)\top}.
\end{equation}

The subsequent procedures are applied to this aggregated square matrix $\mathbf{M} \in \mathbb{R}^{n \times n}$ rather than the original scale of adjacency matrices. The formulation of \eqref{aggregation_frame} is automatically adapted to different dimensions of individual adjacency matrices from primary and bipartite networks. Moreover, as elaborated in Lemma~\ref{lemma3}, we prove that this aggregation formulation does not disrupt the community structure of the primary network. This is of utmost importance when integrating side information into the primary objective, which ensures no negative transfer, meaning that the aggregated method performs at least as well as solely using the primary information. Last but not least, this formulation indeed guarantees information enhancement and effectively increases detection power, as shown in Theorem~\ref{Theorem1}. 
Thus, even if the signal from the primary network, represented by its smallest singular value, is extremely weak, the only requirement for successful community identification is the presence of at least one auxiliary bipartite network with a moderately strong signal. Under these conditions, the convergence rate of community identification is strictly faster than that relying solely on the primary network. All these provide a solid foundation for leveraging diverse data sources to enhance the accuracy and effectiveness of the primary network analyses. 

Next, we apply eigenvalue decomposition to $\mathbf{M}$, and extract the first $K$ leading eigenvectors, denoted as $\hat{\mathbf{U}} = \left[\hat{\mathbf{u}}_1, \hat{\mathbf{u}}_2,\ldots,\hat{\mathbf{u}}_K\right] \in \mathbb{R}^{n \times K}$, which correspond to the community structure of the primary nodes. Subsequently, we apply the SCORE normalization \citep{jin2015fast} to obtain the ratio matrix
$\hat{\mathbf{R}}  \in \mathbb{R}^{n \times (K-1)}$, which adjusts to degree heterogeneity by taking the ratio of eigenvectors to the eigenvector corresponding to the largest eigenvalue. Specifically, the elements of the ratio matrix $\hat{\mathbf{R}}$ are defined as
\begin{equation}\label{ratio-matrix}
\hat{\mathbf{R}}(i, k) =
        \operatorname{sgn}\left(\hat{\mathbf{u}}_{k+1}(i) \right) \cdot \min \left\{\left|{\hat{\mathbf{u}}_{k+1}(i)}/{\hat{\mathbf{u}}_{1}(i)}\right|, T_n\right\},
\end{equation}
where $\operatorname{sgn}(x)$ stands for the sign function, i.e., $\operatorname{sgn}(x)=1$ when $x>0$, $\operatorname{sgn}(0)=0$, and $\operatorname{sgn}(x)=-1$ when $x<0$. In addition, $T_n$ is a threshold, generally set to $\log(n)$. Last, we conduct the $k$-means algorithm on the rows of the ratio matrix $\hat{\mathbf{R}}$ and obtain the detected communities. Algorithm~\ref{Alg:BASIC} summarizes the step-by-step procedure of BASIC.

\begin{algorithm}[!ht]
    \caption{The procedure of BASIC for community detection}
    \label{Alg:BASIC}
    \begin{enumerate}
        \item[\textbf{Input}:] The adjacency matrix of the primary network $\mathbf{A}$, the number of communities $K$ for the primary nodes, the adjacency matrices of bipartite networks $\mathbf{B}^{(q)}$ for $q = 1, \ldots, Q$.
        \item[\textbf{Step 1}:] Compute the aggregated matrix $\mathbf{M}$ by \eqref{aggregation_frame}.
            
        \item[\textbf{Step 2}:] Apply the eigenvalue decomposition to  $\mathbf{M}$ and obtain the first $K$ leading eigenvectors $\hat{\mathbf{U}} = \left[\hat{\mathbf{u}}_1, \hat{\mathbf{u}}_2,\ldots,\hat{\mathbf{u}}_K\right] \in \mathbb{R}^{n \times K}$.
        
        \item[\textbf{Step 3}:] Compute the ratio matrix $\hat{\mathbf{R}} \in \mathbb{R}^{n \times (K-1)}$ in \eqref{ratio-matrix}.
        \item[\textbf{Step 4}:] Apply the $k$-means algorithm to the columns of $\hat{\mathbf{R}}$, and solve for
        $$
        \mathbf{N}^*=\underset{\mathbf{N} \in \mathcal{N}_{n, K-1, K}}{\operatorname{argmin}}\|\mathbf{N}-\hat{\mathbf{R}}\|_F^2,
        $$
        where $\mathcal{N}_{n, K-1, K}$ represents the set of $n \times (K-1)$ matrices with only $K$ distinct rows. 
       \item[\textbf{Step 5}:] Utilize $\mathbf{N}^*$ to assign the membership of primary nodes, $\hat{\boldsymbol{l}} = (\hat{l}_{1}, \ldots, \hat{l}_{n})^{\top}$.
        \item[\textbf{Output}:] Community label vector $\hat{\boldsymbol{l}} = (\hat{l}_{1}, \ldots, \hat{l}_{n})^{\top}$.
    \end{enumerate}
\end{algorithm}

\section{Theoretical Guarantee of BASIC}
\subsection{Rationale of BASIC on Population Level}\label{theory_1}
To establish the theoretical foundation of BASIC, we first analyze the population counterpart of the aggregated square matrix $\mathbf{M}$ proposed in \eqref{aggregation_frame}. Specifically, denote by $\boldsymbol{\Omega}^{(0)}=\mathbb{E}[\mathbf{A}]\in \mathbb{R}^{n \times n}$ the expectation of the adjacency matrix $\mathbf{A}$ and $\boldsymbol{\Omega}^{(q)}=\mathbb{E}[\mathbf{B}^{(q)}] \in \mathbb{R}^{n \times m^{(q)}}$ the expectation of adjacency matrix $\mathbf{B}^{(q)}$ for $q=1,\ldots, Q$. Then define the {\it population aggregated square matrix} $\boldsymbol{\Omega}_M$ as  
\begin{equation}\label{OmegaM}
\boldsymbol{\Omega}_{M} =\boldsymbol{\Omega}^{(0)} \boldsymbol{\Omega}^{(0)\top}+\sum_{q=1}^Q \boldsymbol{\Omega}^{(q)} \boldsymbol{\Omega}^{(q) \top}.
\end{equation}
Then, $\boldsymbol{\Omega}_M$ can serve as the population counterpart of $\mathbf{M}$.

Next, we discuss the rationality of BASIC, by aligning the eigenvectors of $\boldsymbol{\Omega}_{M}$ with the original inherent community structure. 
Given that bipartite nodes are not the focus of this work, for the sake of notation simplicity, we assume that all bipartite networks share the same number of communities for bipartite nodes, the same node size, and the same degree heterogeneity parameters. That is, for $q = 1,\ldots, Q$, assume $K^{(q)} \equiv K'$, $m^{(q)}\equiv m$, and $\delta_j^{(q)}\equiv \delta_j$. Define the degree heterogeneity vectors for primary and bipartite nodes as $\boldsymbol{\theta} = (\theta_1, \ldots, \theta_n)^{\top} \in \mathbb{R}^{n}$ and $\boldsymbol{\delta} = (\delta_1, \ldots, \delta_m)^{\top} \in \mathbb{R}^{m}$, respectively. 
Let $\theta_{\min} \equiv \min _{1 \leqslant i \leqslant n} \theta_i$, $\theta_{\max } \equiv \max _{1 \leqslant i \leqslant n} \theta_i$, $\delta_{\min} \equiv \min _{1 \leqslant i \leqslant m} \delta_i$, and $\delta_{\max} \equiv \max _{1 \leqslant i \leqslant m} \delta_i$. In addition, define community-specific degree heterogeneity vectors as $\boldsymbol{\theta}^{(k)} \in \mathbb{R}^{n}$ and $\boldsymbol{\delta}^{(k^{\prime})} \in \mathbb{R}^{m}$ for $k = 1, \ldots, K$ and $k^{\prime}=1,\ldots, K^{\prime}$, where $\theta^{(k)}_i = \theta_i\mathbb{I}(l_i = k)$ and $\delta^{(k^{\prime})}_i=\delta_i\mathbb{I}(r_i = k^{\prime})$, with the membership labels $l_i$ and $r_i$ defined in Section~\ref{model-section}. 
Furthermore, define the orthonormal membership matrices,  $\boldsymbol{\Theta}_{\boldsymbol{\theta}} \in \mathbb{R}^{n \times K}$ and $\boldsymbol{\Theta}_{\boldsymbol{\delta}} \in \mathbb{R}^{n \times K^{\prime}}$, as $\boldsymbol{\Theta}_{{\theta}}(i, k)={{\theta}_i}/{\|\boldsymbol{\theta}^{(k)}\|}\mathbb{I}(l_i = k)$ and $\boldsymbol{\Theta}_{\boldsymbol{\delta}}(i, k^{\prime})={{\delta}_i}/{\|\boldsymbol{\delta}^{(k^{\prime})}\|}\mathbb{I}(r_i = k^{\prime})$, for $i =1,\ldots, n$, $k = 1,\ldots, K$, and $k^{\prime}=1, \ldots, K^{\prime}$. Then the community memberships of primary nodes can be directly reflected by matrix $\boldsymbol{\Theta}_{\boldsymbol{\theta}}$, since by its definition, node $i$ belongs to community $k$ if $\boldsymbol{\Theta}_{\boldsymbol{\theta}}(i,k)\neq0$. Define two diagonal matrices $\boldsymbol{\Psi}_{\boldsymbol{\theta}} \in \mathbb{R}^{K \times K}$ and $\boldsymbol{\Psi}_{\boldsymbol{\delta}} \in \mathbb{R}^{K^{\prime} \times K^{\prime}}$ with $\boldsymbol{\Psi}_{\boldsymbol{\theta}}(k, k)={\|\boldsymbol{\theta}^{(k)}\|}/{\|\boldsymbol{\theta}\|}$ and $\boldsymbol{\Psi}_{\boldsymbol{\delta}}(k^{\prime}, k^{\prime})={\|\boldsymbol{\delta}^{(k^{\prime})}\|}/{\|\boldsymbol{\delta}\|}$. The diagonal elements in $\boldsymbol{\Psi}_{\boldsymbol{\theta}}$ and $\boldsymbol{\Psi}_{\boldsymbol{\delta}}$ are indeed the community heterogeneity parameters corresponding to the primary and bipartite nodes, respectively. Simple calculation yields
$$
\boldsymbol{\Omega}^{(0)} = \| \boldsymbol{\theta}\|^2  \boldsymbol{\Theta}_{\theta} \mathbf{S}^{(0)} \boldsymbol{\Theta}_{\theta}^{\top}  \quad \text{ and } \quad \boldsymbol{\Omega}^{(q)} = \| \boldsymbol{\theta}\| \|\boldsymbol{\delta} \| \boldsymbol{\Theta}_{\theta} \mathbf{S}^{(q)} \boldsymbol{\Theta}_{\delta}^{\top}, \ \ q=1,\ldots, Q,
$$
where $\mathbf{S}^{(0)} = \boldsymbol{\Psi}_{\boldsymbol{\theta}}\mathbf{E}\boldsymbol{\Psi}_{\boldsymbol{\theta}}^{\top}\in \mathbb{R}^{K \times K}$ and $\mathbf{S}^{(q)} = \boldsymbol{\Psi}_{\boldsymbol{\theta}}\mathbf{F}^{(q)}\boldsymbol{\Psi}_{\boldsymbol{\delta}}^{\top}\in \mathbb{R}^{K \times K'}$ can be viewed as the probability transition matrices with community heterogeneity. 
Then, $\boldsymbol{\Omega}_{M}$ can be reparameterized as 
\begin{equation}\label{equ3}
\boldsymbol{\Omega}_{M} = \left\|\boldsymbol{\theta}\right\|^2\left\|\boldsymbol{\delta}\right\|^2 \boldsymbol{\Theta}_{\theta} \bar{\mathbf{S}} \boldsymbol{\Theta}_{\theta}^{\top},
\end{equation}
where $\bar{\mathbf{S}} \equiv  \left(\left\|\boldsymbol{\theta}\right\|^2 /\left\|\boldsymbol{\delta}\right\|^2\right) \mathbf{S}^{(0)} \mathbf{S}^{(0)\top}+\sum_{q=1}^Q\left(\mathbf{S}^{(q)} \mathbf{S}^{(q)\top}\right)$.

Based on the representation of $\boldsymbol{\Omega}_{M}$ in \eqref{equ3},  Proposition~\ref{Proposition1} below shows that the eigen-structure of $\boldsymbol{\Omega}_{M}$ parallels the community structure of primary nodes. 
\begin{proposition}\label{Proposition1} Let $\boldsymbol{\Omega}_{M} = \mathbf{U} \boldsymbol{\Lambda} \mathbf{U}^{\top}$ be the compact eigenvalue decomposition of $\boldsymbol{\Omega}_{M}$, then the $i$-th leading eigenvalue of $\boldsymbol{\Omega}_{M}$ is
\begin{equation}\label{Pro_1:equ1}
\lambda_i(\boldsymbol{\Omega}_{M})= \begin{cases}\left\|\boldsymbol{\theta}\right\|^2\left\|\boldsymbol{\delta}\right\|^2  \lambda_i(\bar{\mathbf{S}}) & \text { if } \quad 1 \leqslant i \leqslant K, \\ 0 & \text { if } \quad i>K. \end{cases}
\end{equation}
Further let $\bar{\mathbf{S}}=\mathbf{J} \boldsymbol{\Sigma} \mathbf{J}^{\top}$ be the eigenvalue decompositions of $\bar{\mathbf{S}}$. 
Then the $i$-th row of eigenvectors of $\boldsymbol{\Omega}_{M}$ can be expressed as
\begin{equation}\label{Pro_1:equ2}
\mathbf{U}_{\bar{i}}=\frac{{\theta}_i}{\|\boldsymbol{\theta}^{\left(l_i\right)}\|} \mathbf{J}_{\bar{l_i}} \quad \text {for} \quad 1 \leqslant i \leqslant n,
\end{equation}
and $\left\|\mathbf{U}_{\bar{i}}\right\| \asymp {{\theta}_i}/{\|\boldsymbol{\theta}\|}$.
\end{proposition}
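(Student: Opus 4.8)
The plan is to exploit the factored form \eqref{equ3} and reduce the $n \times n$ eigenproblem to the already-diagonalized $K \times K$ problem $\bar{\mathbf{S}} = \mathbf{J}\boldsymbol{\Sigma}\mathbf{J}^\top$. The first step is to record that the membership matrix $\boldsymbol{\Theta}_\theta$ has orthonormal columns: because each primary node lies in exactly one community, distinct columns have disjoint support and are orthogonal, while the squared norm of the $k$-th column equals $\sum_{i: l_i = k} \theta_i^2 / \|\boldsymbol{\theta}^{(k)}\|^2 = 1$ by the definition of $\|\boldsymbol{\theta}^{(k)}\|$. Hence $\boldsymbol{\Theta}_\theta^\top \boldsymbol{\Theta}_\theta = \mathbf{I}_K$.

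Substituting $\bar{\mathbf{S}} = \mathbf{J}\boldsymbol{\Sigma}\mathbf{J}^\top$ into \eqref{equ3} and setting $\mathbf{U} := \boldsymbol{\Theta}_\theta \mathbf{J}$, I would write $\boldsymbol{\Omega}_M = \mathbf{U}\,(\|\boldsymbol{\theta}\|^2\|\boldsymbol{\delta}\|^2\boldsymbol{\Sigma})\,\mathbf{U}^\top$. Since $\mathbf{J}$ is orthogonal and $\boldsymbol{\Theta}_\theta$ has orthonormal columns, $\mathbf{U}^\top\mathbf{U} = \mathbf{J}^\top \boldsymbol{\Theta}_\theta^\top \boldsymbol{\Theta}_\theta \mathbf{J} = \mathbf{I}_K$, so $\mathbf{U}$ has orthonormal columns and the display is a valid compact eigendecomposition of $\boldsymbol{\Omega}_M$. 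Reading off the diagonal gives $\lambda_i(\boldsymbol{\Omega}_M) = \|\boldsymbol{\theta}\|^2\|\boldsymbol{\delta}\|^2 \lambda_i(\bar{\mathbf{S}})$ for $1 \le i \le K$, where the positive scalar preserves the ordering by magnitude; and since $\boldsymbol{\Omega}_M$ factors through the $n \times K$ matrix $\boldsymbol{\Theta}_\theta$ it has rank at most $K$, forcing $\lambda_i(\boldsymbol{\Omega}_M) = 0$ for $i > K$. This establishes \eqref{Pro_1:equ1}.

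For \eqref{Pro_1:equ2}, I would extract the $i$-th row of $\mathbf{U} = \boldsymbol{\Theta}_\theta \mathbf{J}$. The $i$-th row of $\boldsymbol{\Theta}_\theta$ carries its single nonzero entry $\theta_i / \|\boldsymbol{\theta}^{(l_i)}\|$ in column $l_i$, so right-multiplication by $\mathbf{J}$ selects the $l_i$-th row of $\mathbf{J}$, giving $\mathbf{U}_{\bar i} = (\theta_i/\|\boldsymbol{\theta}^{(l_i)}\|)\,\mathbf{J}_{\bar{l_i}}$. Because $\mathbf{J}$ is orthogonal its rows are unit vectors, so $\|\mathbf{U}_{\bar i}\| = \theta_i/\|\boldsymbol{\theta}^{(l_i)}\|$; the $\asymp$ claim then follows from $\|\boldsymbol{\theta}\|^2 = \sum_{k=1}^K \|\boldsymbol{\theta}^{(k)}\|^2$ together with $K$ being fixed and the balance condition $\|\boldsymbol{\theta}^{(k)}\| \asymp \|\boldsymbol{\theta}\|$.

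The eigenvalue and eigenvector identities themselves are immediate consequences of the two orthogonality facts and should go through routinely. I expect the only delicate point to be the last $\asymp$ statement: the algebra yields exactly $\|\mathbf{U}_{\bar i}\| = \theta_i/\|\boldsymbol{\theta}^{(l_i)}\|$, and replacing the community-specific normalizer $\|\boldsymbol{\theta}^{(l_i)}\|$ by the global $\|\boldsymbol{\theta}\|$ requires the communities to be comparably sized in the degree-weighted sense, so I would confirm that the relevant balance condition sits among the standing assumptions.
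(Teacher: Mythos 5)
Your proposal follows essentially the same route as the paper's proof: substitute $\bar{\mathbf{S}}=\mathbf{J}\boldsymbol{\Sigma}\mathbf{J}^{\top}$ into \eqref{equ3}, verify that $\mathbf{U}=\boldsymbol{\Theta}_{\theta}\mathbf{J}$ has orthonormal columns so the display is a valid compact eigendecomposition, read off the eigenvalues and rows, and use $\|\boldsymbol{\theta}\|^2=\sum_{k=1}^K\|\boldsymbol{\theta}^{(k)}\|^2$ with the balance condition to get the $\asymp$ claim. Your closing concern is well placed and resolved: the needed comparability $\|\boldsymbol{\theta}^{(k)}\|\asymp\|\boldsymbol{\theta}^{(l)}\|$ is exactly part of Assumption~\ref{Assumption3}, which together with $K$ fixed gives $\|\boldsymbol{\theta}^{(l_i)}\|\asymp\|\boldsymbol{\theta}\|$, just as the paper uses.
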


Proposition~\ref{Proposition1} implies that the rank of original aggregated matrix $\boldsymbol{\Omega}_{M}\in \mathbb{R}^{n\times n}$ is at most $K$, and connects this $n\times n$ matrix with a low-dimensional matrix $\bar{\mathbf{S}}\in \mathbb{R}^{K\times K}$. 
It also explains the rationale of applying the SCORE-type normalization \eqref{ratio-matrix} to $\boldsymbol{\Omega}_{M}$ upon spectral clustering. Take two primary nodes $i_1$ and $i_2$ for instance, with $\mathbf{U}_{\bar{i_{1}}}=({{\theta}_{i_1}}/{\|\boldsymbol{\theta}^{(l_{i_{1}})}\|}) \mathbf{J}_{\bar{l_{i_{1}}}}$ and $\mathbf{U}_{\bar{i_{2}}}=({{\theta}_{i_{2}}}/{\|\boldsymbol{\theta}^{(l_{i_{2}})}\|}) \mathbf{J}_{\bar{l_{i_{2}}}}$. If nodes $i_1$ and $i_2$ belong to the same community, the only difference between $\mathbf{U}_{\bar{i_{1}}}$ and $\mathbf{U}_{\bar{i_{2}}}$ lies in their degree heterogeneity parameters ${\theta}_{i_{1}}$ and ${\theta}_{i_{2}}$, which can be eliminated by SCORE-normalization. Thus, clustering rows of $\mathbf{U}$ after SCORE-normalization automatically recovers the original community structure.

\subsection{Theoretical Guarantee of BASIC}

To explore the theoretical privilege of BASIC based on the observed aggregated square matrix $\mathbf{M}$, we first impose several assumptions on the probability transition matrices and heterogeneity parameters. Recall $\mathbf{E}$ and $\mathbf{F}^{(q)}$ represent the $K\times K$ and $K\times K^{(q)}$ low-dimensional probability transition matrices among communities for the primary and the $q$-th bipartite network, respectively.

\begin{assumption}\label{Assumption1} 
The matrices $\mathbf{E}\mathbf{E}^{\top}$ and $\mathbf{F}^{(q)} \mathbf{F}^{(q)\top} $ with $1 \leqslant q \leqslant Q$ are irreducible.
\end{assumption}

\begin{assumption}\label{Assumption2}
    There exists at least one probability transit matrix among the primary and bipartite networks, such that the largest singular value (or eigenvalue for primary network) is of higher order than $\sqrt{\log(n)Z}/(\| \boldsymbol{\theta}\| \| \boldsymbol{\delta}\|)$, where $Z \equiv \max \left(\theta_{\max }, \delta_{\max}\right) \max \left(\|\boldsymbol{\theta}\|_1,\|\boldsymbol{\delta}\|_1\right)$.
\end{assumption}

\begin{assumption}\label{Assumption3} The degree heterogeneity parameters satisfy that $\| \boldsymbol{\theta}\| \asymp \| \boldsymbol{\delta}\|$, $\|\boldsymbol{\theta}^{(k)}\| \asymp \|\boldsymbol{\theta}^{(l)} \|$, and $\|\boldsymbol{\delta}^{(k')}\| \asymp \|\boldsymbol{\delta}^{(l')} \|$ for $k,l = 1, \ldots, K$ and $k',l'=1,\ldots, K'$. In addition, $\lim _{n \rightarrow \infty} {\log (n) Z}\slash{{\theta}_{\min}{\delta}_{\min}\|\boldsymbol{\theta}\|_1\|\boldsymbol{\delta}\|_1}=0$.
\end{assumption}

The irreducibility condition in Assumption~\ref{Assumption1}, commonly used in network analysis, implies that no permutation of rows or columns can transform the matrix into a block diagonal form. 
According to Perron-Frobenius Theorem \citep{perron1907matrices, frobenius1912matrices}, Assumption~\ref{Assumption1} ensures that all entries in the leading eigenvector of the aggregated matrix $\boldsymbol{\Omega}_{\mathbf{M}}$ are strictly positive, and hence the ratio matrix in \eqref{ratio-matrix} is well-defined. 
Assumption~\ref{Assumption2} requires at least one network possesses spike structure, meaning that the corresponding largest eigenvalue is far larger than that of the error matrix; see the detailed discussion in \eqref{Jan31:01}. Assumption~\ref{Assumption3} requires the degree heterogeneity parameters between the primary and the bipartite networks, as well as among all communities within each network, to possess the same order. This implies that the community sizes are not be extremely imbalanced, and no community size approaches zero.  Similar conditions can be found in consistent community detection methods, such as \cite{jin2015fast,wang2020spectral}.

Based on the imposed assumptions, we first derive an upper bound of the distance between the observed aggregated matrix $\mathbf{M}$ and its population counterpart $\boldsymbol{\Omega}_{M}$ in Lemma~\ref{Proposition0}.

\begin{lemma}\label{Proposition0} Under Assumptions \ref{Assumption1}, \ref{Assumption2} and \ref{Assumption3}, 
with probability at least $1-o\left(n^{-4}\right)$, we have 
$$\left\|\mathbf{M} - \boldsymbol{\Omega}_{\mathbf{M}}\right\|_{op}
\lesssim \left\|\boldsymbol{\theta}\right\|\left\|\boldsymbol{\delta}\right\| \sqrt{\log (n) Z} \max\left\{ \max_{q=1}^Q\sigma_{\max }\left(\mathbf{F}^{(q)}\right), \lambda_{\max}(\mathbf{E}) \right\}.
$$
\end{lemma}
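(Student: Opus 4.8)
The plan is to control $\|\mathbf{M} - \boldsymbol{\Omega}_M\|_{op}$ by decomposing it across the primary and bipartite contributions and, within each contribution, isolating a ``signal $\times$ noise'' cross term from a purely quadratic noise term. Writing $\mathbf{W}^{(0)} = \mathbf{A} - \boldsymbol{\Omega}^{(0)}$ and $\mathbf{W}^{(q)} = \mathbf{B}^{(q)} - \boldsymbol{\Omega}^{(q)}$ for the centered noise matrices, the identity $\mathbf{A}\mathbf{A}^{\top} - \boldsymbol{\Omega}^{(0)}\boldsymbol{\Omega}^{(0)\top} = \boldsymbol{\Omega}^{(0)}\mathbf{W}^{(0)\top} + \mathbf{W}^{(0)}\boldsymbol{\Omega}^{(0)\top} + \mathbf{W}^{(0)}\mathbf{W}^{(0)\top}$, together with its analogue for each $\mathbf{B}^{(q)}$, the triangle inequality, and submultiplicativity of the operator norm (which, for a general possibly rectangular matrix, equals its largest singular value) give
\[
\|\mathbf{M} - \boldsymbol{\Omega}_M\|_{op} \leqslant 2\|\boldsymbol{\Omega}^{(0)}\|_{op}\|\mathbf{W}^{(0)}\|_{op} + \|\mathbf{W}^{(0)}\|_{op}^2 + \sum_{q=1}^Q \left(2\|\boldsymbol{\Omega}^{(q)}\|_{op}\|\mathbf{W}^{(q)}\|_{op} + \|\mathbf{W}^{(q)}\|_{op}^2\right).
\]
The task thus reduces to bounding the population operator norms $\|\boldsymbol{\Omega}^{(0)}\|_{op}, \|\boldsymbol{\Omega}^{(q)}\|_{op}$ and the noise operator norms $\|\mathbf{W}^{(0)}\|_{op}, \|\mathbf{W}^{(q)}\|_{op}$.

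The population norms follow directly from the factorizations derived in Section~\ref{theory_1}. Since $\boldsymbol{\Theta}_{\theta}$ and $\boldsymbol{\Theta}_{\delta}$ have orthonormal columns, $\|\boldsymbol{\Omega}^{(0)}\|_{op} = \|\boldsymbol{\theta}\|^2 \lambda_{\max}(\mathbf{S}^{(0)})$ and $\|\boldsymbol{\Omega}^{(q)}\|_{op} = \|\boldsymbol{\theta}\|\|\boldsymbol{\delta}\|\sigma_{\max}(\mathbf{S}^{(q)})$, with $\mathbf{S}^{(0)} = \boldsymbol{\Psi}_{\theta}\mathbf{E}\boldsymbol{\Psi}_{\theta}^{\top}$ and $\mathbf{S}^{(q)} = \boldsymbol{\Psi}_{\theta}\mathbf{F}^{(q)}\boldsymbol{\Psi}_{\delta}^{\top}$. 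Under the balanced-heterogeneity part of Assumption~\ref{Assumption3}, the diagonal matrices $\boldsymbol{\Psi}_{\theta}, \boldsymbol{\Psi}_{\delta}$ have entries of a common order and are uniformly well-conditioned (recall $K, K'$ are fixed), so $\lambda_{\max}(\mathbf{S}^{(0)}) \asymp \lambda_{\max}(\mathbf{E})$ and $\sigma_{\max}(\mathbf{S}^{(q)}) \asymp \sigma_{\max}(\mathbf{F}^{(q)})$. Combining with $\|\boldsymbol{\theta}\| \asymp \|\boldsymbol{\delta}\|$ yields $\|\boldsymbol{\Omega}^{(0)}\|_{op} \asymp \|\boldsymbol{\theta}\|\|\boldsymbol{\delta}\|\lambda_{\max}(\mathbf{E})$ and $\|\boldsymbol{\Omega}^{(q)}\|_{op} \asymp \|\boldsymbol{\theta}\|\|\boldsymbol{\delta}\|\sigma_{\max}(\mathbf{F}^{(q)})$.

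The technical heart is the noise bound: I claim that, with probability at least $1 - o(n^{-4})$, both $\|\mathbf{W}^{(0)}\|_{op} \lesssim \sqrt{\log(n)Z}$ and $\max_{1\leqslant q\leqslant Q}\|\mathbf{W}^{(q)}\|_{op} \lesssim \sqrt{\log(n)Z}$ hold simultaneously. Each $\mathbf{W}^{(0)}$ and $\mathbf{W}^{(q)}$ is a centered matrix of independent bounded entries, so I would invoke a matrix Bernstein inequality, writing $\mathbf{W}^{(0)}$ as a sum over node pairs of rank-two mean-zero matrices and handling each rectangular $\mathbf{W}^{(q)}$ through its $(n+m)\times(n+m)$ Hermitian dilation. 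The controlling matrix-variance proxy is the maximal row-sum (and, for the bipartite case, also column-sum) of the entrywise variances; since $\operatorname{Var}(\mathbf{A}(i,j)) \leqslant \boldsymbol{\Omega}^{(0)}(i,j)$ and $\max_i\sum_j\theta_i\theta_j\mathbf{E}(l_i,l_j) \leqslant \theta_{\max}\|\boldsymbol{\theta}\|_1$, while $\max_i\sum_j\theta_i\delta_j\mathbf{F}^{(q)}(l_i,r_j) \leqslant \theta_{\max}\|\boldsymbol{\delta}\|_1$ and $\max_j\sum_i\theta_i\delta_j\mathbf{F}^{(q)}(l_i,r_j) \leqslant \delta_{\max}\|\boldsymbol{\theta}\|_1$, every such proxy is bounded by $Z$. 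Calibrating the deviation level to $t \asymp \sqrt{Z\log n}$ with a sufficiently large constant makes the dimensional prefactor of order $(n+m)\,e^{-c\log n} = o(n^{-4})$, and a union bound over the $Q+1$ matrices (with $Q$ fixed) gives the simultaneous control; here the condition $\log(n)Z/(\theta_{\min}\delta_{\min}\|\boldsymbol{\theta}\|_1\|\boldsymbol{\delta}\|_1) \to 0$ in Assumption~\ref{Assumption3} guarantees $Z \gtrsim \log n$, so the variance term dominates the uniform-bound term and the rate is genuinely $\sqrt{Z\log n}$.

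I expect this Bernstein step to be the main obstacle: one must choose the variance proxy and deviation constant so that the $o(n^{-4})$ tail emerges cleanly, correctly reduce the rectangular bipartite matrices to the symmetric setting via dilation, and verify that the several maximal-degree bounds all collapse to the single quantity $Z$. Granting the two noise bounds, I would close the argument by substitution: the cross terms give $\|\boldsymbol{\Omega}^{(0)}\|_{op}\|\mathbf{W}^{(0)}\|_{op} \lesssim \|\boldsymbol{\theta}\|\|\boldsymbol{\delta}\|\sqrt{\log(n)Z}\,\lambda_{\max}(\mathbf{E})$ and $\|\boldsymbol{\Omega}^{(q)}\|_{op}\|\mathbf{W}^{(q)}\|_{op} \lesssim \|\boldsymbol{\theta}\|\|\boldsymbol{\delta}\|\sqrt{\log(n)Z}\,\sigma_{\max}(\mathbf{F}^{(q)})$, while the quadratic terms are of order $Z\log n$, which Assumption~\ref{Assumption2} forces to be negligible since it makes $\|\boldsymbol{\theta}\|\|\boldsymbol{\delta}\|\max\{\max_{q}\sigma_{\max}(\mathbf{F}^{(q)}),\lambda_{\max}(\mathbf{E})\} \gg \sqrt{\log(n)Z}$. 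Taking the maximum over the transition-matrix singular values and absorbing the fixed $Q$ into the $\lesssim$ constant then delivers exactly the stated bound.
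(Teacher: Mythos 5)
Your proposal matches the paper's proof essentially step for step: the same decomposition of $\mathbf{M}-\boldsymbol{\Omega}_M$ into cross terms $\boldsymbol{\Omega}^{(q)}\mathbf{W}^{(q)\top}$ and quadratic terms $\mathbf{W}^{(q)}\mathbf{W}^{(q)\top}$, the same computation of $\|\boldsymbol{\Omega}^{(0)}\|_{op}$ and $\|\boldsymbol{\Omega}^{(q)}\|_{op}$ from the factorizations through $\mathbf{S}^{(0)}$ and $\mathbf{S}^{(q)}$, and the same use of Assumptions~\ref{Assumption2} and~\ref{Assumption3} to show the quadratic noise terms are dominated by the cross terms. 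The only difference is that where you re-derive the key bound $\max_{0\leqslant q\leqslant Q}\sigma_{\max}(\mathbf{W}^{(q)})\lesssim\sqrt{\log(n)Z}$ from scratch via matrix Bernstein with a Hermitian dilation, the paper simply imports it as Lemma A.2 of \cite{wang2020spectral}; your self-contained derivation, including the correct observation that Assumption~\ref{Assumption3} forces $Z\gtrsim\log n$ so the variance term governs the rate, is a valid substitute for that citation.
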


Lemma~\ref{Proposition0} studies the estimation error for substituting the population aggregated matrix $\boldsymbol{\Omega}_M$ with its observed counterpart $\mathbf{M}$. Under mild conditions to be discussed below Theorem~\ref{Theorem1}, this bound is dominated by the maximum eigenvalue of $\boldsymbol{\Omega}_M$ on the asymptotic sense, which further ensures a vanishing mis-clustering rate of BASIC in Theorem~\ref{Theorem1}. 

To study the mis-clustering rate of BASIC, we follow \cite{wang2020spectral} and define $\mathcal{W} = \{1 \leqslant i \leqslant$ $\left.n:\left\|\mathbf{N}_{\bar{i}}^*-\mathbf{R}_{\bar{i}}\right\| \leqslant {1}/{2}\right\}$ as the set of nodes that are accurately clustered by BASIC, and thus $\mathcal{V}_t \backslash \mathcal{W}$ consists of nodes that are mis-clustered.  
Theorem~\ref{Theorem1} establishes a non-asymptotic bound for the mis-clustering rate of BASIC.
\begin{theorem}\label{Theorem1} Under Assumptions \ref{Assumption1}, \ref{Assumption2} and \ref{Assumption3}, with probability at least  $1-o\left(n^{-4}\right)$, the mis-clustering rate of BASIC satisfies
$$
\frac{|\mathcal{V}_t \backslash \mathcal{W}|}{n} \lesssim  \frac{\log (n) Z T_n^2}{n\theta_{\min}^2\left\|\boldsymbol{\theta}\right\|^4}\left( \operatorname{SNR}^{\operatorname{BASIC}} \right)^{-2},
$$
where $T_n = \log(n)$, and 
$$
\operatorname{SNR}^{\operatorname{BASIC}} = \frac{\sum_{q = 1}^Q \sigma^2_{\min }\left(\mathbf{F}^{(q)}\right) + \lambda^2_{\min }\left(\mathbf{E}\right)}{\max\left\{ \max_{q=1}^Q\sigma_{\max }\left(\mathbf{F}^{(q)}\right), \lambda_{\max}(\mathbf{E}) \right\}}.
$$
\end{theorem}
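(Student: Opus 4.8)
The plan is to run the standard three-stage spectral-clustering argument, all conditioned on the event of Lemma~\ref{Proposition0} (which holds with probability $1-o(n^{-4})$, explaining the probability in the statement). The three stages are: (i) turn the operator-norm bound of Lemma~\ref{Proposition0} into a bound on the perturbation of the leading eigenvectors, $\|\hat{\mathbf{U}}-\mathbf{U}\mathbf{O}\|_F$ for a suitable $K\times K$ orthogonal $\mathbf{O}$; (ii) propagate this into a Frobenius bound on the SCORE ratio matrix, $\|\hat{\mathbf{R}}-\mathbf{R}\|_F$, where $\mathbf{R}$ is the population ratio matrix built from $\mathbf{U}$; and (iii) convert $\|\hat{\mathbf{R}}-\mathbf{R}\|_F$ into a count of mis-clustered nodes through the $k$-means step. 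The quantity $\operatorname{SNR}^{\operatorname{BASIC}}$ will emerge as the ratio of an eigen-gap (its numerator) to the perturbation scale (its denominator).

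For stage (i), Proposition~\ref{Proposition1} identifies the nonzero eigenvalues of $\boldsymbol{\Omega}_M$ as $\|\boldsymbol{\theta}\|^2\|\boldsymbol{\delta}\|^2\lambda_i(\bar{\mathbf{S}})$, $1\le i\le K$, with a vanishing $(K{+}1)$-th eigenvalue, so the relevant $\sin\Theta$ eigen-gap is $\lambda_K(\boldsymbol{\Omega}_M)=\|\boldsymbol{\theta}\|^2\|\boldsymbol{\delta}\|^2\lambda_{\min}(\bar{\mathbf{S}})$. Since $\bar{\mathbf{S}}$ is a sum of positive semidefinite pieces $(\|\boldsymbol{\theta}\|^2/\|\boldsymbol{\delta}\|^2)\mathbf{S}^{(0)}\mathbf{S}^{(0)\top}$ and $\mathbf{S}^{(q)}\mathbf{S}^{(q)\top}$, Weyl's inequality together with $\boldsymbol{\Psi}_{\boldsymbol{\theta}}\asymp\mathbf{I}$, $\boldsymbol{\Psi}_{\boldsymbol{\delta}}\asymp\mathbf{I}$ (from the balance part of Assumption~\ref{Assumption3} with $K,K'$ fixed) gives $\lambda_{\min}(\bar{\mathbf{S}})\gtrsim\lambda^2_{\min}(\mathbf{E})+\sum_{q}\sigma^2_{\min}(\mathbf{F}^{(q)})$, i.e. the numerator of $\operatorname{SNR}^{\operatorname{BASIC}}$. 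Feeding the Lemma's bound $\|\mathbf{M}-\boldsymbol{\Omega}_M\|_{op}\lesssim\|\boldsymbol{\theta}\|\|\boldsymbol{\delta}\|\sqrt{\log(n)Z}\max\{\max_q\sigma_{\max}(\mathbf{F}^{(q)}),\lambda_{\max}(\mathbf{E})\}$ into Davis--Kahan (with the $\sqrt{K}$ absorbed as a constant) then yields $\|\hat{\mathbf{U}}-\mathbf{U}\mathbf{O}\|_F\lesssim \sqrt{\log(n)Z}/(\|\boldsymbol{\theta}\|\|\boldsymbol{\delta}\|)\cdot(\operatorname{SNR}^{\operatorname{BASIC}})^{-1}$, where the $\log(n)Z$ factor, the $(\operatorname{SNR}^{\operatorname{BASIC}})^{-2}$ dependence, and the $\|\boldsymbol{\theta}\|\|\boldsymbol{\delta}\|\asymp\|\boldsymbol{\theta}\|^2$ scale of the final bound all originate.

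Stage (iii) is short. Because the population rows $\mathbf{R}_{\bar i}$ depend on $i$ only through $l_i$ and $\mathbf{J}_{\bar{l_i}}$, the matrix $\mathbf{R}$ has exactly $K$ distinct, order-one separated rows and hence lies in $\mathcal{N}_{n,K-1,K}$. Optimality of $\mathbf{N}^*$ gives $\|\mathbf{N}^*-\hat{\mathbf{R}}\|_F\le\|\mathbf{R}-\hat{\mathbf{R}}\|_F$, so $\|\mathbf{N}^*-\mathbf{R}\|_F\le 2\|\hat{\mathbf{R}}-\mathbf{R}\|_F$; since every node in $\mathcal{V}_t\backslash\mathcal{W}$ contributes at least $1/2$ to $\|\mathbf{N}^*-\mathbf{R}\|_F$, one obtains $|\mathcal{V}_t\backslash\mathcal{W}|\lesssim\|\hat{\mathbf{R}}-\mathbf{R}\|_F^2$. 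Dividing by $n$ reduces the theorem to a sufficiently sharp bound on $\|\hat{\mathbf{R}}-\mathbf{R}\|_F^2$, which establishing the order-one row separation of $\mathbf{R}$ (a property of the fixed matrix $\mathbf{J}$ after SCORE normalization) justifies.

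The crux is stage (ii), the SCORE normalization, and I expect it to be the main obstacle. Here $\hat{\mathbf{R}}_{\bar i}$ is formed by dividing $\hat{\mathbf{u}}_{k+1}(i)$ by $\hat{\mathbf{u}}_1(i)$, and by Proposition~\ref{Proposition1} the population denominator $\mathbf{u}_1(i)\asymp\theta_i/\|\boldsymbol{\theta}\|$ can be as small as $\theta_{\min}/\|\boldsymbol{\theta}\|$; this small denominator is exactly what injects the $\theta_{\min}$ and $\|\boldsymbol{\theta}\|$ powers into the final denominator. The plan is to split nodes into ``regular'' ones, on which $|\hat{\mathbf{u}}_1(i)|\ge\tfrac12|\mathbf{u}_1(i)|$ and a first-order expansion of the ratio gives $\|\hat{\mathbf{R}}_{\bar i}-\mathbf{R}_{\bar i}\|\lesssim (T_n/|\mathbf{u}_1(i)|)\,\|\hat{\mathbf{U}}_{\bar i}-(\mathbf{U}\mathbf{O})_{\bar i}\|$, and ``irregular'' ones, which are few and on which truncation at $T_n$ caps $|\hat{\mathbf{R}}(i,k)-\mathbf{R}(i,k)|\le 2T_n$; this truncation is the source of the $T_n^2$ factor. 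The Perron--Frobenius positivity of Assumption~\ref{Assumption1} keeps $\mathbf{u}_1(i)$ strictly positive (so $\mathbf{R}$ is well-defined) and makes the top eigenvalue simple, so $\hat{\mathbf{u}}_1$ aligns with $\mathbf{u}_1$ up to sign without rotation, confining the orthogonal ambiguity $\mathbf{O}$ to the numerator block; handling this ambiguity is delicate because SCORE's coordinatewise ratio is not rotation-equivariant, and I would follow the alignment technique of \cite{jin2015fast} and \cite{wang2020spectral}. Summing the per-row bound against $\|\hat{\mathbf{U}}-\mathbf{U}\mathbf{O}\|_F^2$ while tracking the heterogeneity factors $\theta_{\min},\|\boldsymbol{\theta}\|,\|\boldsymbol{\delta}\|$ through the row-norm identity $\|\mathbf{U}_{\bar i}\|\asymp\theta_i/\|\boldsymbol{\theta}\|$, and finally invoking Assumption~\ref{Assumption3} to set $\|\boldsymbol{\delta}\|\asymp\|\boldsymbol{\theta}\|$, is the calculation that pins down the exact $\theta_{\min}^2\|\boldsymbol{\theta}\|^4$ normalization in the stated rate.
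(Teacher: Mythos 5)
Your plan is correct and follows essentially the same route as the paper: the operator-norm bound of Lemma~\ref{Proposition0} is fed into a Davis--Kahan/Lei--Rinaldo bound with eigen-gap $\lambda_K(\boldsymbol{\Omega}_M)=\|\boldsymbol{\theta}\|^2\|\boldsymbol{\delta}\|^2\lambda_{\min}(\bar{\mathbf{S}})$ lower-bounded by summing the PSD pieces (the paper's Lemma~\ref{Proposition2}), the SCORE ratio is controlled by splitting nodes into a regular set $\mathcal{V}_U$ and a small irregular set where the $T_n$-truncation caps each row (the paper's Lemma~\ref{Lemma3}, Lemma~\ref{Lemma4}, and Proposition~\ref{Proposition3}), and the $k$-means step is closed by the optimality of $\mathbf{N}^*$ together with the order-one separation of the population ratio rows (the paper's Lemma~\ref{lemma3} and the \cite{wang2020spectral}-style count). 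All the structural constants you flag --- the $(\operatorname{SNR}^{\operatorname{BASIC}})^{-2}$ from gap over perturbation, $\theta_{\min}^{2}\|\boldsymbol{\theta}\|^{4}$ from the small SCORE denominators, $T_n^2$ from truncation, and the sign-only alignment $C_U\in\{-1,1\}$ of $\hat{\mathbf{u}}_1$ with the rotation confined to $\hat{\mathbf{U}}_{2\sim K}$ --- appear in the paper's argument exactly as you predict.
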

The proof of Theorem~\ref{Theorem1} is given in Appendix~\ref{Proof_th1}. 
In Theorem~\ref{Theorem1}, $\operatorname{SNR}^{\operatorname{BASIC}}$ can be understood as the integrated signal-to-noise ratio (SNR) of all networks involved. If $\operatorname{SNR}^{\operatorname{BASIC}} \gg \sqrt{{\log (n) Z T_n^2}/{n\theta_{\min}^2\left\|\boldsymbol{\theta}\right\|^4}}$, the mis-clustering rate $|\mathcal{V}_t \backslash \mathcal{W}|/n \to 0$ as $n \to \infty$. To demonstrate the merit of BASIC through Theorem~\ref{Theorem1}, for a fair comparison, we also derive the mis-clustering rate for the primary network alone under the same theoretical framework, which is
\begin{equation}\label{The1_equ1}
\frac{|\mathcal{V}_t \backslash \mathcal{W}|}{n} \lesssim  \frac{\log (n) Z  T_n^2}{n\theta_{\min}^2\left\|\boldsymbol{\theta}\right\|^4} \left(\frac{\lambda_{\max }\left(\mathbf{E}\right)}{\lambda^2_{\min }\left(\mathbf{E}\right)}\right)^2 \equiv \frac{\log (n) Z  T_n^2}{n\theta_{\min}^2\left\|\boldsymbol{\theta}\right\|^4}  (\operatorname{SNR}^{\operatorname{Primary}})^{-2},
\end{equation}
where $\operatorname{SNR}^{\operatorname{Primary}}=\lambda^2_{\min}(\mathbf{E})/\lambda_{\max}(\mathbf{E})$ is the corresponding signal-to-noise ratio of primary network solely. A similar definition of SNR has been used in \cite{Jin_2021,jin2023optimal}.
The above non-asymptotic bound in \eqref{The1_equ1} highly relies on the minimal signal $\lambda_{\min }\left(\mathbf{E}\right)$ of the primary network, thus it easily diverges for weak-signal primary networks where $\lambda_{\min }\left(\mathbf{E}\right)$ goes to zero. On the other hand, according to Theorem~\ref{Theorem1}, BASIC leverages the risk of divergence by introducing the bipartite information, then the mis-clustering rate can still vanish if only one of bipartite networks is not of weak-signal. In practice, $\operatorname{SNR}^{\operatorname{BASIC}}$ typically enhances $\operatorname{SNR}^{\operatorname{Primary}}$, leading to faster convergence of the mis-clustering rate, since the numerator of the former consists of the summation of minimal signals from all involved networks, while its enlarged denominator only takes one of the maximum signals. Especially, if $\sigma_{\max }\left(\mathbf{F}^{(q)}\right) \asymp \lambda_{\max}(\mathbf{E})$ for $q=1,\ldots, Q$, indicating all bipartite networks have spike structure, then the asymptotic relative gain from BASIC is
$$
\frac{\operatorname{SNR}^{\operatorname{BASIC}}}{\operatorname{SNR}^{\operatorname{Primary}}} \asymp \frac{\sum_{q =1}^Q \sigma^2_{\min }\left(\mathbf{F}^{(q)}\right) + \lambda^2_{\min }\left(\mathbf{E}\right)}{\lambda^2_{\min}(\mathbf{E})}.
$$
That is, the learning performance of the community structures of the primary network can be substantially enhance, only if at least one of bipartite networks has non-degenerating signal $\sigma^2_{\min }\left(\mathbf{F}^{(q)}\right)$.

Furthermore, even if all bipartite networks are in fact pure noise, with $\sigma_{\min}(\mathbf{F}^{(q)}) \ll \sqrt{\log(n)Z}/(\| \boldsymbol{\theta}\| \| \boldsymbol{\delta}\|)$, $q=1,\ldots, Q$, but the primary network contains relatively strong signals. We can obtain $\max\left\{ \max_{q=1}^Q\sigma_{\max }\left(\mathbf{F}^{(q)}\right), \lambda_{\max}(\mathbf{E}) \right\} = \lambda_{\max}(\mathbf{E})$ and $\sum_{q =1}^Q \sigma^2_{\min }\left(\mathbf{F}^{(q)}\right) + \lambda^2_{\min }\left(\mathbf{E}\right) \asymp \lambda^2_{\min }\left(\mathbf{E}\right)$. Hence, we have
$$
\operatorname{SNR}^{\operatorname{BASIC}} \asymp \frac{\lambda^2_{\min }\left(\mathbf{E}\right)}{\lambda_{\max}(\mathbf{E})},
$$
which matches $\operatorname{SNR}^{\operatorname{Primary}}$. This indicates that BASIC naturally prevents negative knowledge transfer when incorporating bipartite information.

\section{Simulation}
In this section, we assess the performance of the proposed BASIC method under the DCBM for the primary network and the BiDCBM for bipartite networks, under both weak and strong signal conditions of the primary network. We investigate how bipartite networks with varying signal strengths can indeed enhance community detection, and verify that the clustering performance is not degraded even if the added bipartite information is weak. We consider various combinations of node sizes and the number of communities, addressing both balanced and imbalanced community structures. To evaluate the clustering accuracy of BASIC, we calculate the Adjusted Rand Index (ARI) \citep{hubert1985comparing} that reflects the consistency between the clustering results and the inherent true labels, ranging from -1 to 1. A higher ARI value indicates higher clustering accuracy. The SCORE method \citep{jin2015fast} applied to the primary network is treated as baseline. 

\subsection{Simulation Setup}

We generate the mean matrices $\Omega^{(0)}$ and $\Omega^{(q)}$, $q=1,\ldots, Q$, for the primary and bipartite networks, respectively, following \cite{li2020network}. We take three combinations of $\{n,m,K\}$: $\{600, 300, 3\}$, $\{600, 300, 5\}$, and $\{1200, 600, 5\}$. For the community structure of our primary interest, both balanced and imbalanced community sizes are considered. In balanced cases, all communities have the same sizes $n/K$. In imbalanced cases, the community sizes of the primary nodes are respectively set to $\{100, 200, 300\}$, $\{50, 100, 100, 150, 200\}$ and $\{100, 200, 200, 300, 400\}$. Then assign the node membership labels $\boldsymbol{l}=(l_1,\ldots, l_n)^\top$ and $\boldsymbol{r}^{(q)}=(r_1^{(q)},\ldots, r_m^{(q)})^\top$, without loss of generality, in a sequential manner. Taking the balanced case for instance, 
$$\boldsymbol{l}=(\underbrace{1,\ldots,1}_{n/K },\underbrace{2,\ldots,2}_{n/K },\ldots, \underbrace{K,\ldots,K}_{n/K })^\top.$$ 
Given $\boldsymbol{l}$, further define the community membership matrix $\mathbf{X}\in \mathbb{R}^{n\times K}$ for the primary network, where $\mathbf{X}(i,k)=\mathbb{I}$($l_i=k$) for $i=1,\ldots,n$ and $k=1,\dots,K$.
In addition, take $\boldsymbol{\Pi}=(1-\beta) \mathbf{I}_{K}+\beta\mathbf{1}_{K} \mathbf{1}_{K}^{\top}$, where $\mathbf{I}_{K}$ is a $K \times K$ identity matrix and $\mathbf{1}_{K}$ denotes a column vector of length $K$ with every entry being 1, and $\beta$ represents the common out-in ratio, i.e., the ratio of between-block and within-block probability of edges. The values of $\beta$ will be specified later in different scenarios. As $\beta$ increases, the communities become less distinguishable, resulting in weak-signal networks. Last, we draw a vector of node degree parameter $\mathbf{d}=(d_1,\ldots, d_n)^\top$ from a power-law distribution with lower bound 1 and scaling parameter 5. When normalized to the range between 0 and 1, $\mathbf{d}$ corresponds to the vector of degree heterogeneity parameter $\boldsymbol{\theta}$ in Section~\ref{theory_1}. 
Given all the above quantities, we can define the mean matrix $\boldsymbol{\Omega}^{(0)} \propto \diag(\mathbf{d}) \mathbf{X} \boldsymbol{\Pi} \mathbf{X}^{\top}\diag(\mathbf{d})^{\top}$ according to the DCBM, so that the probability of an edge between nodes only depends on the community structure $\mathbf{X}$ and the node degree parameter $\mathbf{d}$. We can specify a normalizing factor when generating $\boldsymbol{\Omega}^{(0)}$ to adjust the average node degree in the network, aiming to prevent the network from being too dense or sparse. In our simulation, we set the average degree to be 40. In the same fashion, we can generate the mean matrices $\boldsymbol{\Omega}^{(q)}$, $q=1,\ldots, Q$ for the bipartite networks. Here we take $Q=5$. All simulation results are based on 200 replications.

\subsection{Simulation Results}
We evaluate the performance of BASIC under various signal conditions of primary and bipartite networks. 
Firstly, we consider the weak-signal primary network, with the out-in ratio $\beta$ of the primary network set to $0.5$ \citep{li2020network}. Recall that a larger $\beta$ leads to a weaker signal. Then, we vary the out-in ratios of the 5 bipartite networks in the following four cases:

\begin{figure}[!t]
    \centering
    \includegraphics[width=0.85\textwidth]{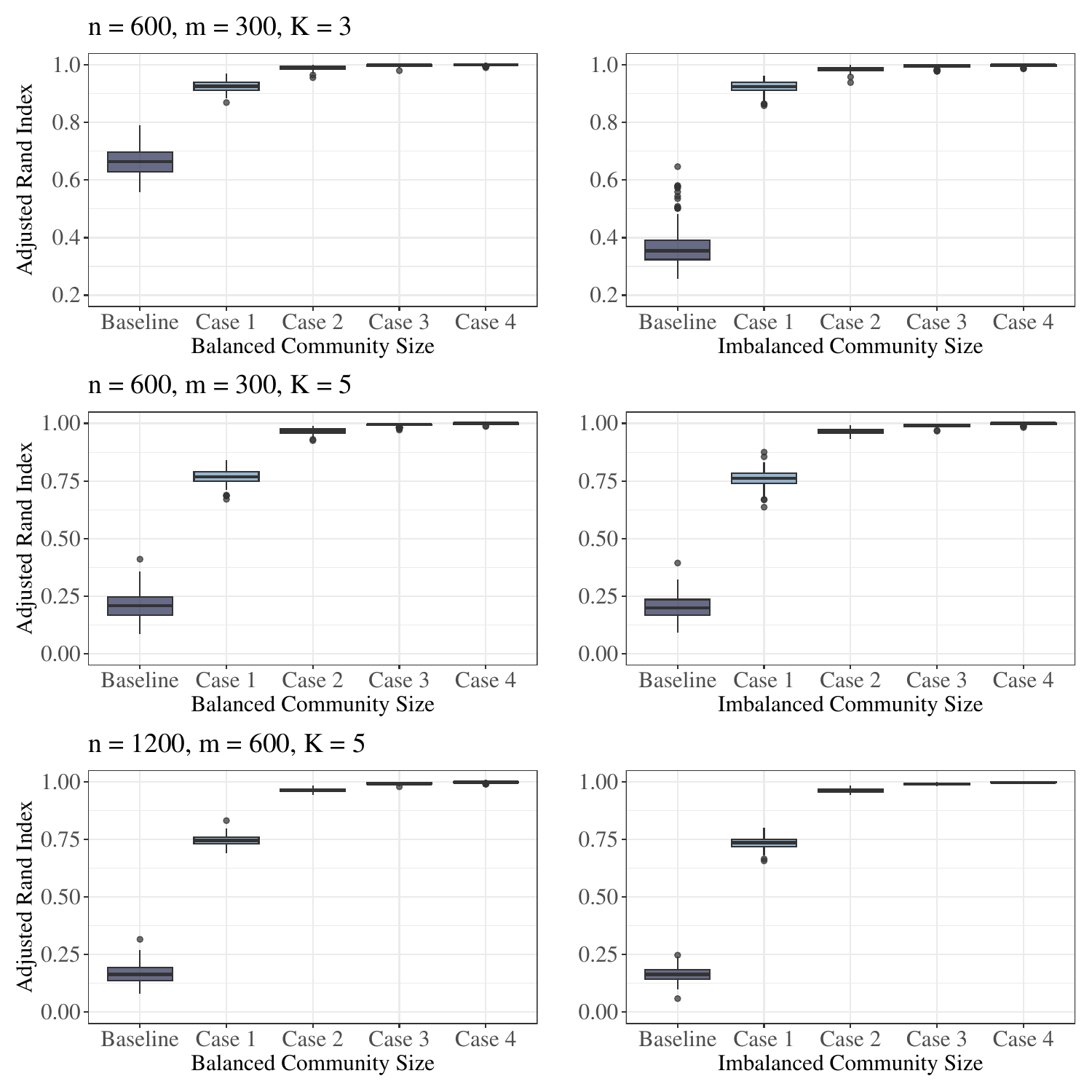}
    \caption{ARI for weak-signal primary networks. The baseline refers to the results obtained by using only the primary network. Cases 1, 2, 3, and 4 correspond to using 0, 1, 2, and 3 strong bipartite networks, respectively.}
    \label{fig:Case_1}
\end{figure}

\begin{itemize}
    \item Case 1: $0.5, 0.5, 0.5, 0.5, 0.5$ (5 weak signals)
    \item Case 2: $0.1, 0.5, 0.5, 0.5, 0.5$ (1 strong and 4 weak signals)
    \item Case 3: $0.1, 0.1, 0.5, 0.5, 0.5$ (2 strong and 3 weak signals)
    \item Case 4: $0.1, 0.1, 0.1, 0.5, 0.5$ (3 strong and 2 weak signals)
\end{itemize}

\begin{figure}[!t]
    \centering
    \includegraphics[width=0.85\textwidth]{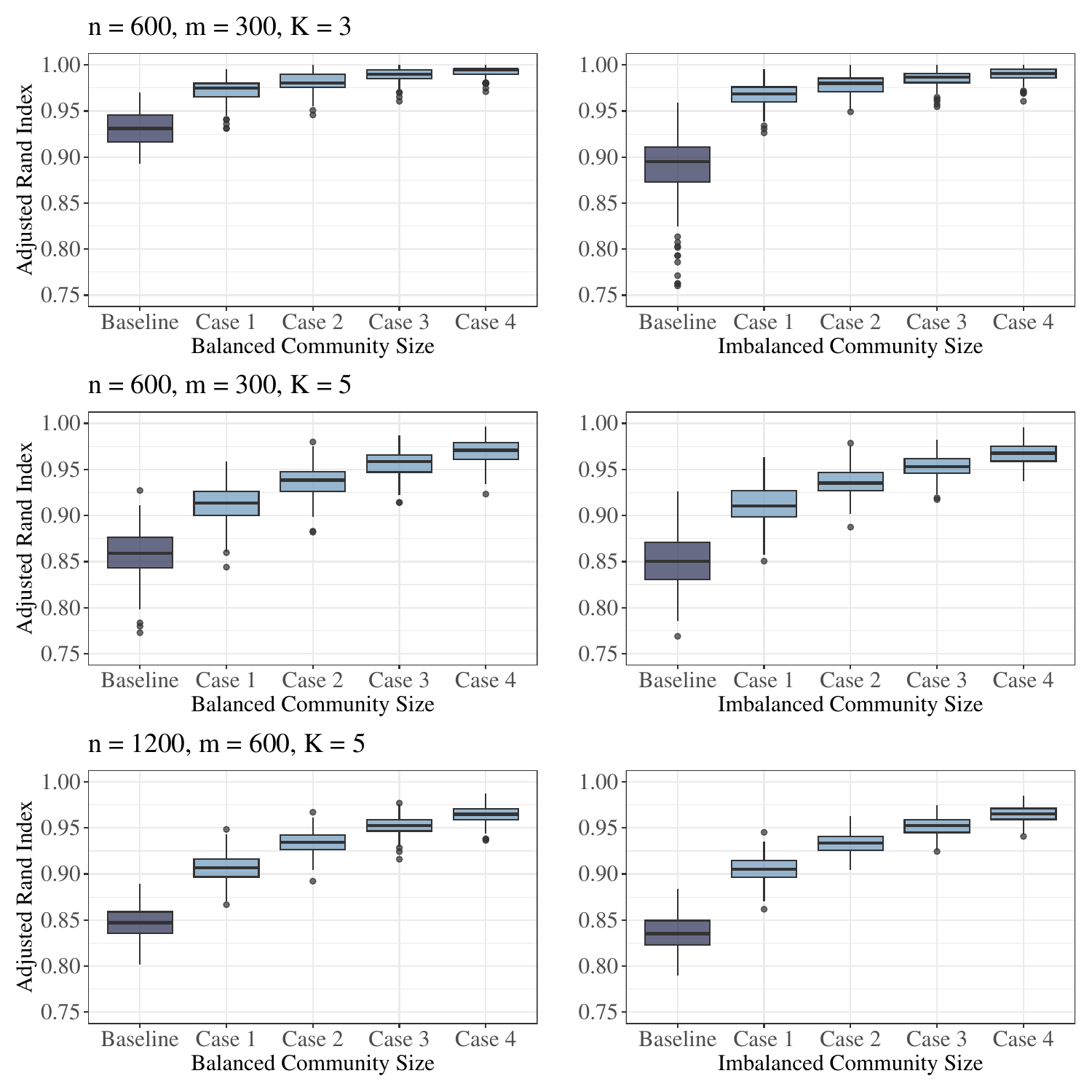}
    \caption{ARI for strong-signal primary networks. The baseline refers to the results obtained by using only the primary network. Cases 1, 2, 3, and 4 correspond to using 0, 1, 2, and 3 bipartite networks with the same signal strength as the primary network, respectively.}
    \label{fig:Case_2}
\end{figure}

The above four cases correspond to the use of 0, 1, 2, and 3 strong bipartite networks, respectively. The results of community detection are illustrated in Figure~\ref{fig:Case_1}, where the baseline SCORE method uses only the primary network. We observe obvious enhancement of ARI by incorporating bipartite information, for both balanced and imbalanced community sizes. The ARI of the baseline is around 0.25 in all four cases. This is consistent with our theoretical result \eqref{The1_equ1} - for weak-signal primary networks, the mis-clustering rate easily diverges. Furthermore, the performance of BASIC is further improved as more strong-signal bipartite networks are incorporated. Last but not least, we observe that even if all five bipartite networks are weak-signal ones (Case 1), the enhancement is still obvious.

Secondly, we set the community structure signal of the primary network to be relatively strong, with the out-in ratio $\beta$ of the primary network being $0.3$. We aim to investigate whether incorporating the bipartite network can further enhance performance or, at the very least, prevent degradation. We set 0, 1, 2, and 3 bipartite networks with out-in ratios equal to that of the primary network, while the other bipartite networks set to $0.5$.

\begin{itemize}
    \item Case 1: $0.5, 0.5, 0.5, 0.5, 0.5$ (5 weak signals)
    \item Case 2: $0.3, 0.5, 0.5, 0.5, 0.5$ (1 strong and 4 weak signals)
    \item Case 3: $0.3, 0.3, 0.5, 0.5, 0.5$ (2 strong and 3 weak signals)
    \item Case 4: $0.3, 0.3, 0.3, 0.5, 0.5$ (3 strong and 2 weak signals)
\end{itemize}

The results are depicted in Figure~\ref{fig:Case_2}. The performances of the baseline in most cases are already fairly satisfactory due to the relatively strong signal from the primary network. However, leveraging information from bipartite networks can further enhance the performance of community detection in the primary network. Additionally, as shown in Case 1, even when the community structure of the bipartite networks is unclear, utilizing BASIC does not deteriorate the community detection. These phenomena are observed in both balanced and imbalanced cases and are consistent with the theoretical results in Theorem~\ref{Theorem1}.

\section{Real Data Analysis: Structure Learning of Author Collaboration Network}

\subsection{Data Description}

\begin{table}[ht]
\caption{An illustrative example of a statistical publication. Basic information such as title, abstract, keywords, citation counts, author information, and reference list are obtained. The citation counts are provided by the {\it Web of Science} up to 2022.}
\vspace{-0.5cm}
\newcommand{\tabincell}[2]{\begin{tabular}{@{}#1@{}}#2\end{tabular}}
\label{tab:data_example}
\begin{center}
\begin{tabular}{ll}
    \hline
    \textbf{Variable} & \textbf{Example} \\
    \hline
    Title & \tabincell{l}{A proportional hazards model for the subdistribution of \\a competing risk}\\
    Abstract &  \tabincell{l}{With explanatory covariates, the standard analysis for \\competing risks data involves modeling the cause-specific\\ hazard functions via a proportional hazards assumption...}\\
    Keywords & \tabincell{l}{hazard of subdistribution; martingale; partial likelihood;\\ transformation model}\\
    Journal & JASA\\
    Year & 1999\\
    Citation counts & 8,265 (until 2022)\\
    Author information & \tabincell{l}{Fine, Jason P.@
University of Pittsburgh;\\ Gray, Robert J.@Harvard University} \\
    Reference list &  \tabincell{l}{$\left[1\right]$ Nonparametric estimation of partial transition-probabilities \\in multiple decrement models@Aalen, O@1978@AoS\\ $\left[2\right]$ Estimates of absolute cause-specific risk in cohort studies\\Benichou, J \& Gail, MH@1990@Biometrics \\$\left[3\right]$ ...}\\
    \hline
    \end{tabular}
    \end{center}
\end{table}

In this section, we utilize the proposed BASIC method to analyze an author collaboration network dataset. We collect statistical publications from 42 renowned statistical journals from 1981 to 2021 in {\it Web of Science} ({\it www.webofscience.com}). We obtain titles, abstracts, keywords, citation counts up to 2022, years, journals, author information (including names, institutions, and regions) and reference lists, as illustrated in Table~\ref{tab:data_example}. After a challenging data cleaning process, we construct a collaboration network with 16,125 nodes and 22,530 edges as the original network: the nodes represent the authors, with an undirected and unweighted edge between two authors if they have published two or more papers together \citep{zhang2023community,ji2022co}. The resulting collaboration network has a density of 0.017\%, indicating that it is very sparse. To concentrate on the most important nodes, we extract the $c$-core network by iteratively removing nodes with a degree less than $c$ until the network stabilizes \citep{2016Discussion}. The $c$-core network is a commonly used method for extracting core information from networks \citep{miao2023informative,ding2023matrix}.
Specifically, we extract the 4-core of the collaboration network and obtain the largest connected component, resulting in a core network with 737 nodes and 2,453 edges, with a network density of 0.904\%. This core network serves as the primary network for community detection. In addition, author information can help us obtain the corresponding institution and region of the authors. Therefore, we consider three bipartite networks: the author-paper network, the author-institution network, and the author-region network.

\subsection{Community Detection by BASIC}

In this subsection, we use information from three bipartite networks, from the perspectives of papers, institutions, and regions, to assist the community structure learning of the primary collaboration network. 
The SCORE method applied to the collaboration network is treated as the baseline. The number of communities $K$ in the primary network is determined by the edge cross-validation (ECV) algorithm \citep{li2020network}. Specifically, we set the maximum number of communities to 30 according to the scree plot as illustrated in Figure~\ref{fig:scree_plot_collaboration}. By repeating the ECV process 20 times for $K$ from 1 to 30, we obtain the optimal number of communities as 12\footnote{We set the parameters of ECV by convention, with the number of samplings set to 3 and the proportion of holdout nodes set to 0.1.}. According to \cite{Jin_2021}, the eigenvalues of a weak-signal network often have the $K$-th and $(K+1)$-th values that are ``close". When $K=12$, we compute the quantity $1 - {\hat{\lambda}_{K+1}}/{\hat{\lambda}_K} = 0.011$, which is smaller than the commonly used scale-free threshold of 0.1. This result highlights that the collaboration network is indeed a weak-signal network. In contrast, for typical strong signal networks, this quantity exceeds 0.1, such as in the Karate (0.414) \citep{zachary1977information}  and Polblogs (0.6) \citep{adamic2005political} networks.

\begin{table}[!htp]
\scriptsize
\caption{Utilizing the information from three bipartite networks: author-paper, author-institution, and author-region networks to assist the community detection results of the primary network (collaboration network). Communities are sorted by size in descending order.} 
\vspace{-0.5cm}
\newcommand{\tabincell}[2]{\begin{tabular}{@{}#1@{}}#2\end{tabular}}
\label{tab:BASIC_result}
\begin{center}
\begin{tabular}{ccccc}
    \hline
    \textbf{ID} & \textbf{Size} & \textbf{Author} & \textbf{Keywords}  \\
    \hline
    \multirow{5}{*}{1} & \multirow{5}{*}{148} & Balakrishnan, Narayanaswamy & Order statistics, Maximum likelihood \\
    ~ &  ~ &  Hothorn, Torsten & EM algorithm, Exponential distribution \\
    ~ &  ~ &  Rao, J. N. K. & Weibull distribution, Monte Carlo simulation \\
    ~ &  ~ &  Kundu, Debasis & Birnbaum-Saunders distribution, Likelihood ratio test \\
    ~ &  ~ &  Cordeiro, Gauss Moutinho & Censored data, Local influence \\
    \hline
    \multirow{5}{*}{2} & \multirow{5}{*}{139} & Tibshirani, Robert & Survival analysis, Bootstrap\\
    ~ &  ~ &  Hastie, Trevor J. & Causal inference, LASSO \\
    ~ &  ~ &  Friedman, Jerome H. & Functional data analysis, Nonparametric regression\\
    ~ &  ~ &  Fine, Jason P. & EM algorithm, Variable selection\\
    ~ &  ~ &  Wei, Lee-Jen & Missing data, Robustness\\
    \hline
    \multirow{5}{*}{3} & \multirow{5}{*}{91} & Bai, Zhidong & Asymptotic normality, Variable selection\\
    ~ &  ~ &  Li, Wai-Keung & Longitudinal data, Empirical likelihood\\
    ~ &  ~ &  Fang, Kaitai & Quantile regression, Oracle property\\
    ~ &  ~ &  Peng, Heng & Robustness, EM algorithm\\
    ~ &  ~ &  Yin, Guosheng & Estimating equations, High-dimensional data\\
    \hline        
    \multirow{5}{*}{4} & \multirow{5}{*}{75} & Carlin, Bradley P. & Markov chain Monte Carlo, Gibbs sampling\\
    ~ &  ~ &  Gelfand, Alan. E. & Bayesian inference, Dirichlet process\\
    ~ &  ~ &  Cook, R. Dennis & Gaussian process, Empirical Bayes\\
    ~ &  ~ &  Casella, George & Variable selection, Central subspace\\
    ~ &  ~ &  Wu, C. F. Jeff & Minimaxity, Hierarchical model\\
    \hline   
    \multirow{5}{*}{5} & \multirow{5}{*}{57} & Fan, Jianqing & Variable selection, LASSO\\
    ~ &  ~ &  Li, Runze & EM algorithm, Model selection\\
    ~ &  ~ &  Zou, Hui & Empirical likelihood, Asymptotic normality\\
    ~ &  ~ &  Tsai, Chih-Ling & Oracle property, Estimating equation\\
    ~ &  ~ &  Hornik, Kurt & High-dimensional data, SCAD\\
    \hline
    \multirow{5}{*}{6} & \multirow{5}{*}{45} & Hall, Peter Gavin & Bootstrap, Robustness\\
    ~ &  ~ &  Zeileis, Achim & Bandwidth, Kernel methods\\
    ~ &  ~ &  Mukerjee, Rahul & Consistency, Small area estimation\\
    ~ &  ~ &  Cuevas, Antonio & Nonparametric regression, Mean squared error\\
    ~ &  ~ &  Basu, Analabha & Density estimation, Influence function\\
    \hline   
    \multirow{5}{*}{7} & \multirow{5}{*}{40} & Rousseeuw, Peter J. & Linear mixed model, Missing data\\
    ~ &  ~ &  Kenward, Michael G. & Longitudinal data, Missing at random\\
    ~ &  ~ &  Molenberghs, Geert & Sensitivity analysis, Influence function\\
    ~ &  ~ &  Croux, Christophe & Breakdown point, Random effects\\
    ~ &  ~ &  Verbeke, Geert & Multiple imputation, Pseudo-likelihood\\
    \hline           
    \multirow{5}{*}{8} & \multirow{5}{*}{37} & Ruppert, David & Functional data analysis, Measurement error\\
    ~ &  ~ &  Stefanski, Leonard A. & Penalized splines, Nonparametric regression\\
    ~ &  ~ &  Liang, Hua & Mixed models, Longitudinal data\\
    ~ &  ~ &  Crainiceanu, Ciprian M. & Smoothing, Model selection\\
    ~ &  ~ &  Kneib, Thomas & Bootstrap, P-splines\\
    \hline       
    \multirow{5}{*}{9} & \multirow{5}{*}{31} & Marron, James S. & Bootstrap, Kernel smoothing\\
    ~ &  ~ &  Haerdle, Wolfgang Karl & Empirical likelihood, Asymptotic normality\\
    ~ &  ~ &  Wand, Matt P. & Bandwidth selection, Density estimation\\
    ~ &  ~ &  Jones, M. C. & Robust estimation, Smoothing\\
    ~ &  ~ &  Mammen, Enno & Bandwidth, Kernel estimator \\
    \hline
    \multirow{5}{*}{10} & \multirow{5}{*}{29} & Ibrahim, Joseph G. & Gibbs sampling, Missing data\\
    ~ &  ~ &  Lipsitz, Stuart R. & EM algorithm, Generalized estimating equations\\
    ~ &  ~ &  Zeng, Donglin & Markov Chain Monte Carlo, Semiparametric efficiency\\
    ~ &  ~ &  Ryan, Louise M. & Longitudinal data, Missing at random\\
    ~ &  ~ &  Zhu, Hongtu & Random effects, Logistic regression\\
    \hline
    \multirow{5}{*}{11} & \multirow{5}{*}{28} & Gijbels, Irene & Forward search, Bootstrap\\
    ~ &  ~ &  Hjort, Nils Lid & Nonparametric regression, Survival analysis\\
    ~ &  ~ &  Mardia, Kanti V. & Robustness, Weak convergence\\
    ~ &  ~ &  Morgan, Byron J. T. & Infectious disease, Smoothing\\
    ~ &  ~ &  Atkinson, Anthony C. & Surveillance, Right censoring\\
    \hline   
    \multirow{5}{*}{12} & \multirow{5}{*}{17} & Carroll, Raymond. J. & Measurement error, Nonparametric regression\\
    ~ &  ~ &  Smith, Adrian F. M. & Dimension reduction, Variable selection\\
    ~ &  ~ &  Zhu, Lixing & Markov Chain Monte Carlo, Longitudinal data\\
    ~ &  ~ &  Dettet, Holger & Bootstrap, Empirical likelihood\\
    ~ &  ~ &  Genton, Marc G. & Bayesian methods, Robustness\\
\hline
\end{tabular}
\end{center}
\end{table}

Subsequently, employing the author-paper, author-institution, and author-region network as bipartite networks, we investigate the community structure of the primary network (collaboration network) using the newly proposed BASIC method. Table~\ref{tab:BASIC_result} presents the five representative authors in each community, the size of the community, and the top five keywords by frequency. Communities are sorted by size in descending order. From Table~\ref{tab:BASIC_result}, it can be seen that the largest community consists of 148 authors and the smallest community consists of 17 authors. Specifically, the top three and the fifth authors in Community 2 are all from Harvard University and collaborate closely with each other. The fourth author, Professor Fine, Jason P., is from the University of North Carolina, but Professor Wei Lee-Jen, who is a doctoral advisor for Professor Fine, Jason P., is from Harvard University. Therefore, it is reasonable to classify them in the same community. The authors in Community 3 come mainly from institutions in China, including Northeast Normal University - China, the Chinese Academy of Sciences, the University of Hong Kong, and the Hong Kong Baptist University. Next, we focus on Community 5, which consists of 57 authors. The top five authors in Community 5 specialize in the high-dimensional field. Professor Fan Jianqing is the doctoral advisor of Professor Li Runze. They have made significant contributions to penalized regression and screening methods. Most of the authors in Community 7 come from Belgium, including institutions like KU Leuven, Hasselt University, and Ghent University. The top-ranked author is Professor Peter J. Rousseeuw, a renowned statistician from KU Leuven, whose research focuses on robust statistics and cluster analysis. Professors Geert Molenberghs and Christophe Croux are among his doctoral students. 

\subsection{Community Structure and Collaboration Patterns}

In this subsection, we select three representative communities to further analyze collaboration patterns among statisticians. These include the second largest community, i.e., Community 2, the primarily Chinese statisticians in Community 5, and Community 10, which consists of statisticians from different regions. Figure~\ref{fig:sub_com2} shows the visualization of the largest connected component in Community 2, with several representative nodes (authors) highlighted in dark blue. 
We find that some authors play a ``bridging" role in collaborations. For example, Professor Jason P. Fine (represented by the purple node in Figure~\ref{fig:sub_com2}) from the University of North Carolina at Chapel Hill acts as a bridge connecting two relatively close-connected groups of nodes. One group consists of his Ph.D. advisor, Professor Lee-Jen Wei, along with his peers, Professor Cai Tianxi and Professor Lu, Tian, while the other group includes his colleague, Professor Hudgens, Michael G., also from the University of North Carolina at Chapel Hill. They are all renowned statisticians in the field of biostatistics. Similarly, Professor Cai, Tianwen Tony from the Wharton School at the University of Pennsylvania, also acts as a bridge, connecting the group of colleagues at the University of Pennsylvania with the group that includes his sister Professor Cai, Tianxi from Harvard University. In addition, the professors in the group on the right have all studied or worked at the Harvard School of Public Health, including Professor Betensky, Rebecca A., Robins, James M., and others. In summary, the authors of this community are all involved in the fields of biostatistics and public health statistics.

\begin{figure}[!ht]
    \centering
    \includegraphics[width=0.7\textwidth]{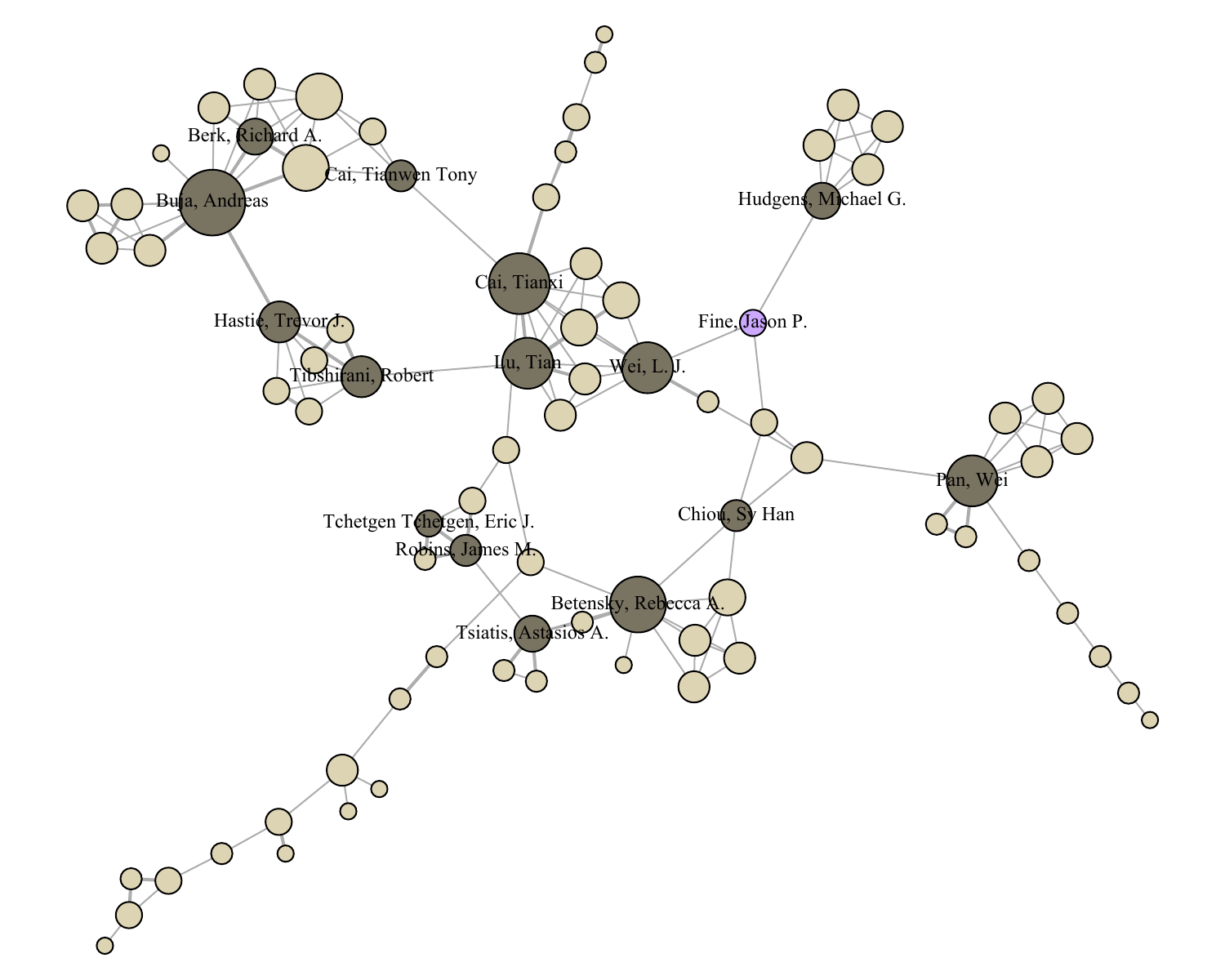}
    \caption{The largest connected component of Community 2 in the collaboration network.}
    \label{fig:sub_com2}
\end{figure}

Figure~\ref{fig:sub_com5} shows the largest connected component of Community 5 in the collaboration network. The representative nodes in this community include Professor Fan, Jinqing from Princeton University, Professor Li, Runze from Pennsylvania State University, Professor Zou, Hui from University of Minnesota, Professor Tsai, Chih-Ling from University of California Davis, and others. In addition, we find many interesting phenomena, further validating the effectiveness of our method. For example, Professors Fan, Li, and Zou form a loop, indicating that they collaborate very closely. Professor Ma, Shuangge and Professor Huang, Jian, along with others, are relatively closely connected, appearing to form a sub-community. Professor Leng, Chenlei from the University of Warwick serves as a bridge in this community, connecting the group represented by Professor Qin, Jing from Hong Kong Polytechnic University. From the perspective of research directions, the authors in this community all specialize in high-dimensional fields. 

\begin{figure}[!ht]
    \centering
    \includegraphics[width=0.6\textwidth]{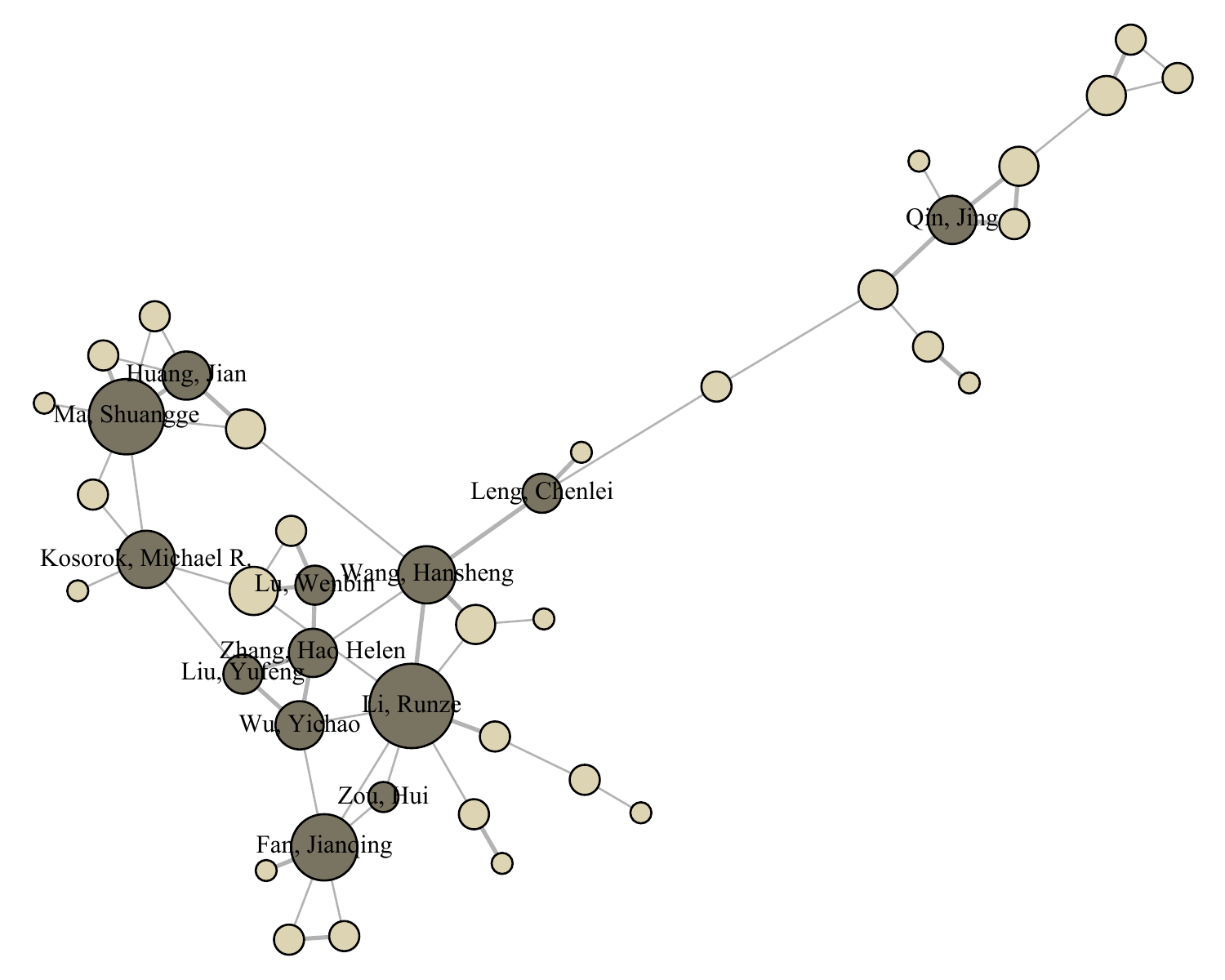}
    \caption{The largest connected component of Community 5 in the collaboration network.}
    \label{fig:sub_com5}
\end{figure}

The last community we want to discuss is Community 10. 
As shown in Figure~\ref{fig:sub_com10}, the visualization of Community 10 reveals that the nodes within this community are very closely connected. The density of this community is 21.2\%, much higher than that of Community 2 (5.5\%) and Community 5 (9.3\%). This suggests that the collaboration among the authors in this community is more close. The representative authors of this community, Professors Ibrahim, Joseph G., Zhu, Hongtu, and Styner, Martin A. are all from the University of North Carolina at Chapel Hill, which are renowned statisticians. It is very interesting that both Professor Ibrahim, Joseph G. and Professor Zhu, Hongtu are the Ph.D. advisors of Professor Shi Xiaoyan. Additionally, there are many higher-order complete sub-network structures in this community, where every pair of distinct nodes is connected by an edge. For example, a four-node complete sub-graph is formed by Professor Ibrahim, Joseph G., Professor Sinha, Debajyoti, Professor Lipsitz, Stuart R., and Professor Fitzmaurice, Garrett A., which further indicates that this community is highly collaborative. 

\begin{figure}[!ht]
    \centering
    \includegraphics[width=0.35\textwidth]{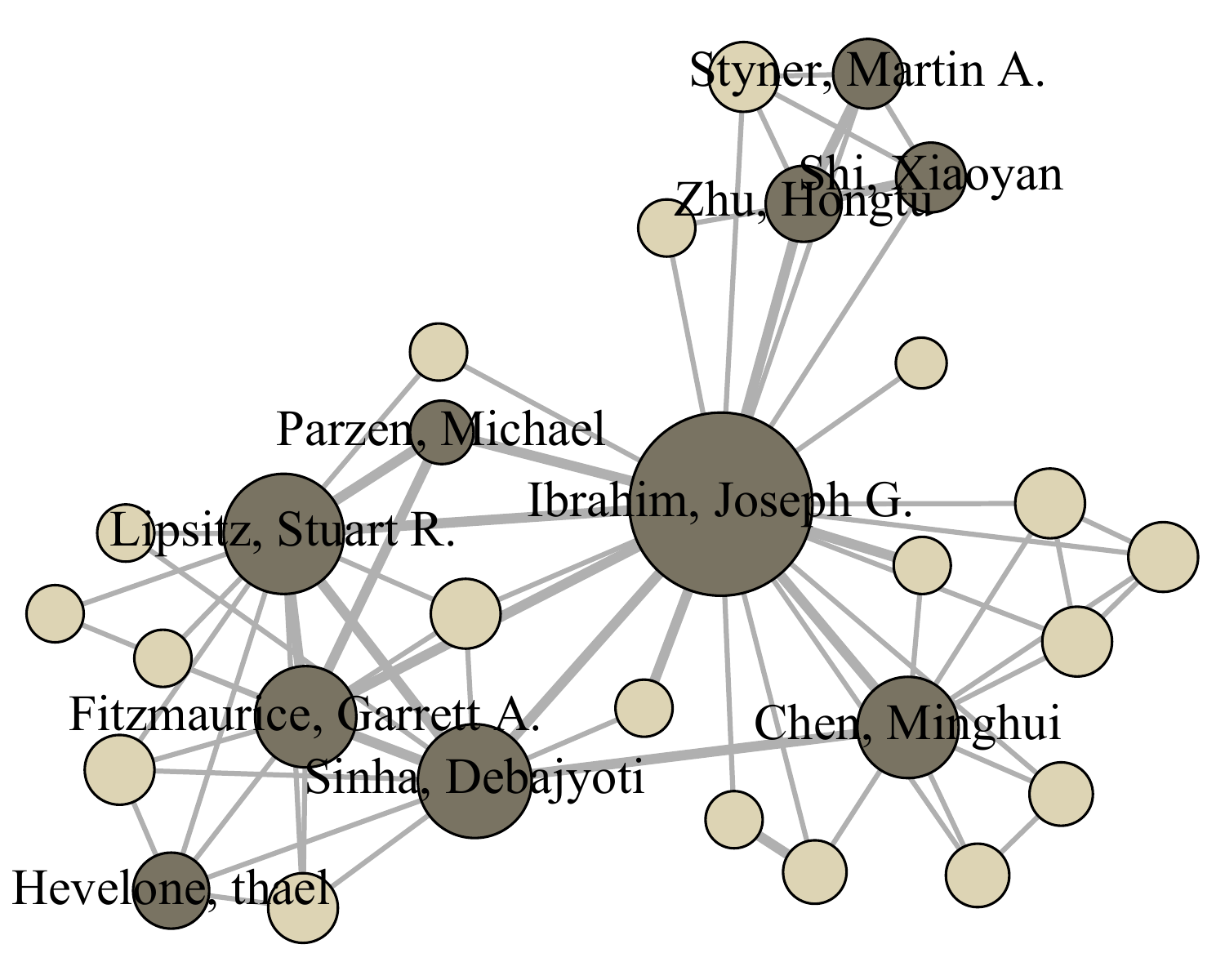}
    \caption{The largest connected component of Community 10 in the collaboration network.}
    \label{fig:sub_com10}
\end{figure}

\subsection{Comparison with Other Methods}

To demonstrate that utilizing the bipartite network brings improvements, we compare the results of our newly proposed BASIC method with SCORE \citep{jin2015fast} and SCORE+ \citep{Jin_2021}. 
The SCORE method essentially performs community detection using only the primary network, while SCORE+ builds on SCORE by applying pre-PCA normalization and Laplacian transformation to the adjacency matrix and considering an additional eigenvector for clustering. The subplots on the top left, bottom left and bottom right of Figure~\ref{fig:comparison} show the different community detection results by BASIC and SCORE, respectively. Nodes in different communities are assigned different colors. In general, BASIC and SCORE exhibited more balanced community sizes, while the largest community in SCORE+ accounted for 68.7\% of the total number of nodes. We find that SCORE+ is not suitable for our collaboration network. This is because SCORE+ requires that the $K$-th eigenvalue is very close to the $(K+1)$-th eigenvalue, while there is a relatively large gap between the $(K+1)$-th and $(K+2)$-th eigenvalues. However, as shown in Table~\ref{tab:ratio_k}, in the collaboration network studied in this paper, not only the $K$-th and $(K+1)$-th eigenvalues but even the $(K+2)$-th eigenvalue are very close, making it difficult to argue that considering one additional eigenvector would bring a significant improvement. Therefore, the SCORE+ method does not apply well to our collaboration network. By incorporating additional structural information into the collaboration network, we are able to achieve better results than SCORE+.

\begin{table}[!ht]
\caption{Variation of $1 - {\hat{\lambda}_{K+1}}/{\hat{\lambda}_K}$in our collaboration network as $K$ varies from 1 to 16. Note that the number of communities in our collaboration network is 12.}
\label{tab:ratio_k}
\begin{center}
\begin{tabular}{cccccccccccccccc}
    \hline
    \textbf{$K$} &  1 &  2 &  3 &  4 &  5 &  6 &  7 &  8 \\
    \hline
    \textbf{$1 - {\hat{\lambda}_{K+1}}/{\hat{\lambda}_K}$} & 0.098 &  0.105 &  0.093 & 0.062 & 0.004& 0.097& 0.046& 0.006 \\
    \hline
    \hline
    \textbf{$K$} & 9 &  10 &  11 &  \textbf{12} &  13 &  14 &  15 & 16\\
    \hline
    \textbf{$1 - {\hat{\lambda}_{K+1}}/{\hat{\lambda}_K}$} & 0.045& 0.051& 0.019 & \textbf{0.011} & 0.012 &0.020 & 0.004 & 0.012\\
    \hline
    \end{tabular}
    \end{center}
\end{table}

\begin{figure}[!ht]
    \centering
\includegraphics[width=0.95\textwidth]{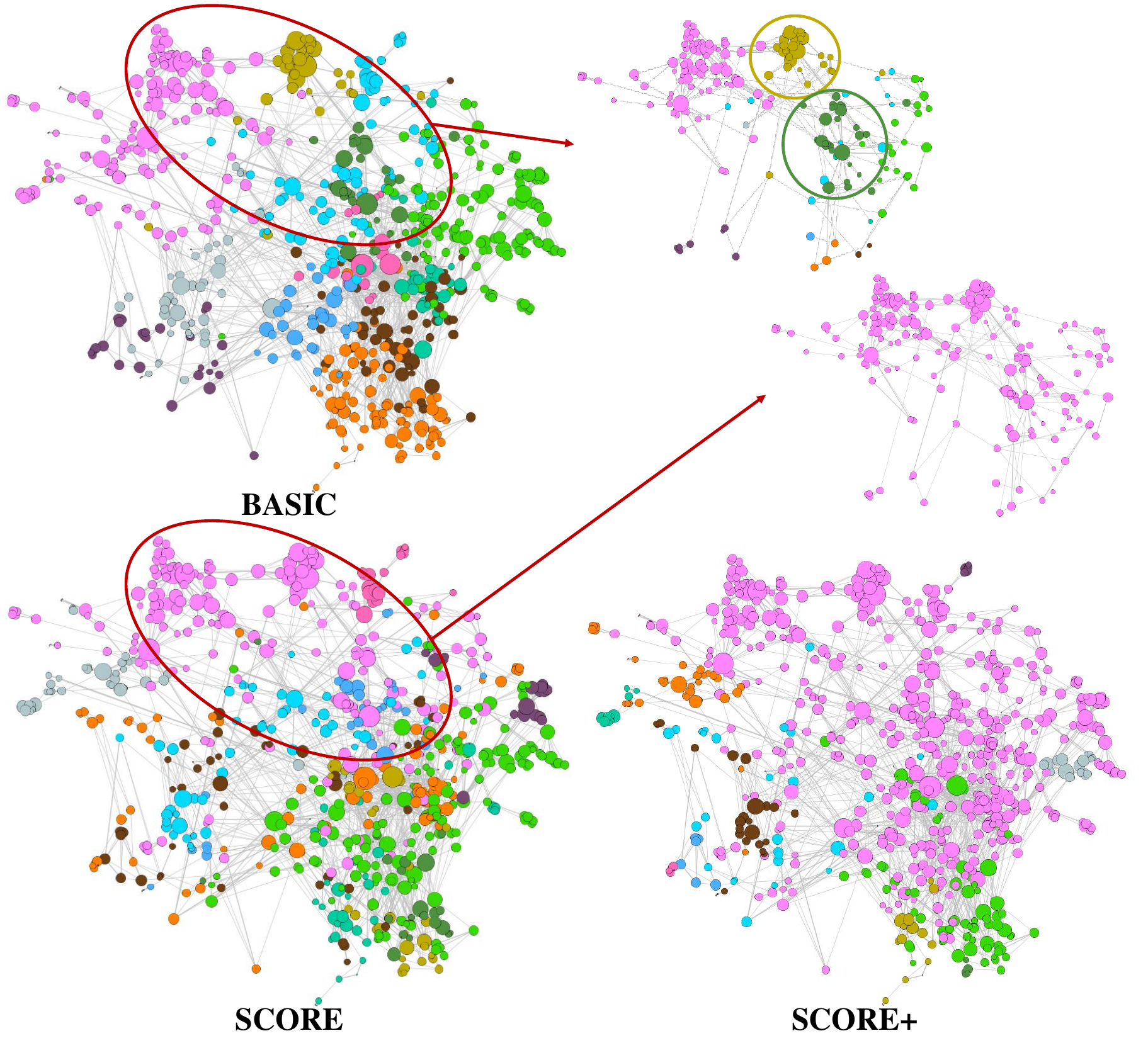}
    \caption{Top Left: Community detection results using the BASIC method; Top Right: Community detection results of part of selected nodes using the BASIC and SCORE methods. Bottom Left: Community detection results using the SCORE method; Bottom Right: Community detection results using the SCORE+ method; The nodes from different communities are assigned different colors.}
    \label{fig:comparison}
\end{figure}

In comparison of BASIC and SCORE, we observe that SCORE categorizes 55\% of the top 20 most cited authors into the same community. In contrast, BASIC shows more balanced communities than SCORE. To provide a clearer comparison of the differences between BASIC and SCORE, the subplot on the top right shows the community structure of the selected nodes. Visually, these nodes exhibit distinct community structures, but the SCORE method groups all of them into the same community. In contrast, our BASIC method divides them into three main communities. Specifically, the yellow node community within the yellow circle of Figure~\ref{fig:comparison} consists mainly of authors from KU Leuven and Hasselt University in Belgium. The community corresponding to the green nodes within the green circle is the highly connected Community 10 mentioned above. In addition, SCORE assigns Professors Fine, Jason P., and Wei, Lee-Jen to two different communities. However, Professor Wei, Lee-Jen supervises the doctoral studies of Professor Fine, Jason P., and they collaborate closely. In contrast, BASIC assigns them to the same community and groups them with professors from Harvard University. 

\section{Conclusion}

In this paper, we propose a Bipartite Assisted Spectral-clustering approach for Identifying Communities (BASIC). It incorporates bipartite network information to community structure learning in social network analysis. By introducing an aggregated squared adjacency matrix, BASIC effectively takes into account the bipartite information without distorting the primary network structure. We systematically study the theoretical properties of BASIC, and demonstrate its privilege across various scenarios of simulation studies. Furthermore, we collect a large-scale academic network dataset from the statistics field, with author collaboration network being our primary interest. We also construct three bipartite networks - author-paper, author-institution, and author-region, from the collected data. Then we explore the community structure of the collaboration network using BASIC, leading to numerous intriguing findings.





\appendix

\section{Figure}

Figure~\ref{fig:scree_plot_collaboration} shows the scree plot of the primary collaboration network.

\begin{figure}[!htp]
    \centering
    \includegraphics[width=0.6\textwidth]{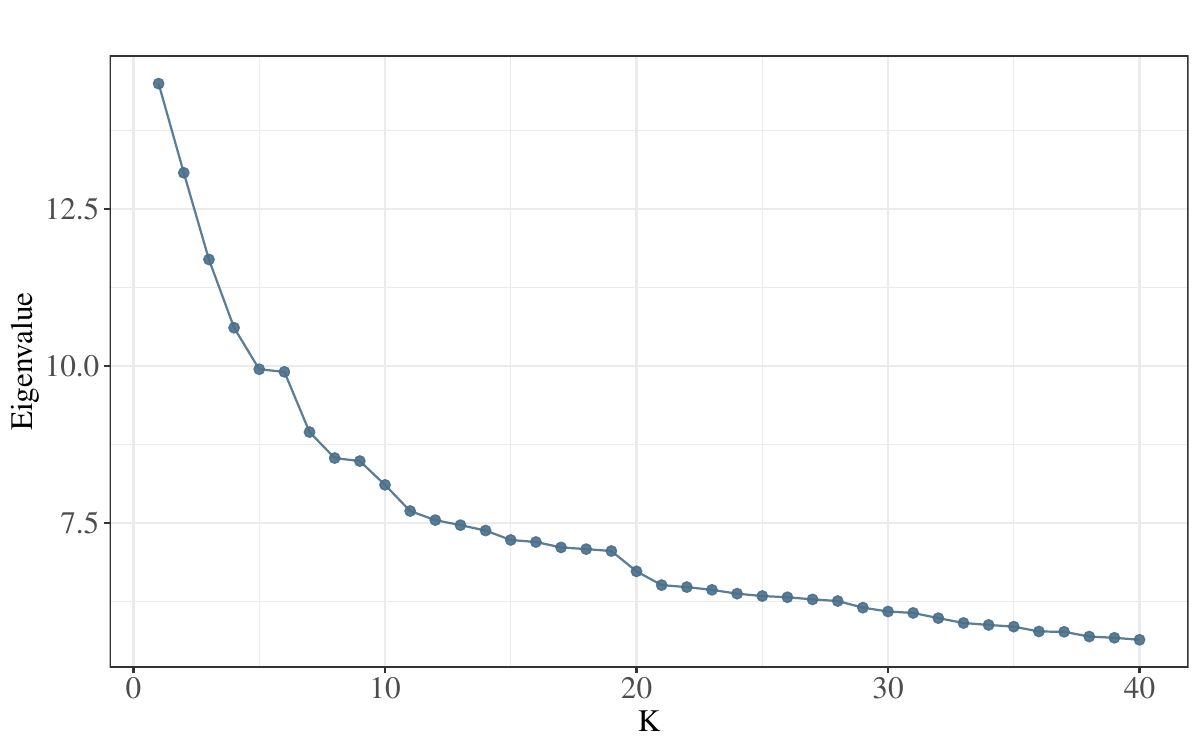}
    \caption{The scree plot of the primary collaboration network}
    \label{fig:scree_plot_collaboration}
\end{figure}

\section{Proof}
\subsection{Proof of Proposition~\ref{Proposition1}}\label{Appendix1}

\begin{proof}
Recall that the mean structure of aggregated matrix $\boldsymbol{\Omega}_{M} \equiv \left\|\boldsymbol{\theta}\right\|^2\left\|\boldsymbol{\delta}\right\|^2 \boldsymbol{\Theta}_{\theta} \bar{\mathbf{S}} \boldsymbol{\Theta}_{\theta}^{\top}$ as shown in \eqref{equ3}. We first explicitly describe the eigen-structure of $\boldsymbol{\Omega}_{M}$.
Based on this and symmetry of $\bar{\mathbf{S}}$ and assume its first $K$ eigenvalues are nonzero, it can be decomposed as $\bar{\mathbf{S}} =\mathbf{J} \boldsymbol{\Sigma} \mathbf{J}^{\top}$, where $\boldsymbol{\Sigma} \in \mathbb{R}^{K \times K}$ is a diagonal matrix with positive eigenvalues arranged in descending order, and columns in $\mathbf{J} = \left[\mathbf{{j}}_1, \mathbf{{j}}_2,\ldots,\mathbf{{j}}_K\right] \in \mathbb{R}^{K \times K}$ are the corresponding eigenvectors.  Substituting $\bar{\mathbf{S}}=\mathbf{J} \boldsymbol{\Sigma} \mathbf{J}^{\top}$ into \eqref{equ3}, we can obtain:
\begin{equation}\label{proof_1:equ1}
\boldsymbol{\Omega}_{M} = \|\boldsymbol{\theta}\|^2\|\boldsymbol{\delta}\|^2\left(\boldsymbol{\Theta}_{\theta} \mathbf{J}\right) \boldsymbol{\Sigma}\left(\boldsymbol{\Theta}_{\theta} \mathbf{J}\right)^{\top}.
\end{equation}
We can verify that $\boldsymbol{\Theta}_{\theta} \mathbf{J}$ is orthonormal as $\boldsymbol{\Theta}_{\theta}$ and $\mathbf{J}$ are both orthonormal matrices. Therefore, \eqref{proof_1:equ1} is an eigenvalue decomposition of $\boldsymbol{\Omega}_{M}$, we rewrite it as $\boldsymbol{\Omega}_{M} = \mathbf{U} \boldsymbol{\Lambda} \mathbf{U}^{\top}$, where
\begin{equation}\label{proof_1:equ2}
\boldsymbol{\Lambda}=\|\boldsymbol{\theta}\|^2 \|\boldsymbol{\delta}\|^2 \boldsymbol{\Sigma},
\end{equation}
\begin{equation}\label{proof_1:equ3}
\mathbf{U}=\boldsymbol{\Theta}_{\boldsymbol{\theta}} \mathbf{J}.
\end{equation}
To view \eqref{proof_1:equ2}, $\boldsymbol{\Omega}_{M}$ has $K$ non-zero eigenvalues with 
$$
\lambda_i(\boldsymbol{\Omega}_{M}) = \|\boldsymbol{\theta}\|^2 \|\boldsymbol{\delta}\|^2 \lambda_i(\boldsymbol{\Sigma}) = \|\boldsymbol{\theta}\|^2 \|\boldsymbol{\delta}\|^2 \lambda_i(\boldsymbol{\bar{\mathbf{S}}}) \text{ for } 1 \leqslant i \leqslant K,
$$ 
which gives \eqref{Pro_1:equ1}. In addition, each row of \eqref{proof_1:equ3} can be rewritten as $({{\theta}_i}/{\|\boldsymbol{\theta}^{\left(l_i\right)}\|}) \mathbf{J}_{\bar{l_i}}$, giving \eqref{Pro_1:equ2}. Moreover, recall $\mathbf{J}$ is a square and orthogonal matrix, and we have $\left\|\mathbf{U}_{\bar{\boldsymbol{i}}}\right\|=\left\|({{\theta}_i}/{\|\boldsymbol{\theta}^{\left(l_i\right)}\|}) \mathbf{J}_{\bar{l_i}}\right\|={{\theta}_i}/{\|\boldsymbol{\theta}^{\left(l_i\right)}\|}$, implying $\left\|\mathbf{U}_{\bar{\boldsymbol{i}}}\right\| \asymp {{\theta}_i}/{\|\boldsymbol{\theta}\|}$.

\end{proof}

\subsection{Proof of Lemma~\ref{Proposition0}}

\begin{proof}
Denote the empirical version of the aggregated adjacency matrix as $\mathbf{M} = \mathbf{A} \mathbf{A}^{\top} + \sum_{q=1}^Q \mathbf{B}^{(q)} \mathbf{B}^{(q)\top}$. For notation simplicity, we rewrite $\mathbf{A}$ as $\mathbf{B}^{(0)}$, then $\mathbf{M} =\sum_{q=0}^Q \mathbf{B}^{(q)} \mathbf{B}^{(q)\top}$. By definition, we obtain,
$$
\begin{aligned}
\mathbf{M} & =\sum_{q=0}^Q\left(\boldsymbol{\Omega}^{(q)}+\mathbf{W}^{(q)}\right)\left(\boldsymbol{\Omega}^{(q)}+\mathbf{W}^{(q)}\right)^{\top} \\
& =\sum_{q=0}^Q \boldsymbol{\Omega}^{(q)} \boldsymbol{\Omega}^{(q)\top}+\sum_{q=0}^Q \boldsymbol{\Omega}^{(q)} \mathbf{W}^{(q) \top}+\sum_{q=0}^Q \mathbf{W}^{(q)} \boldsymbol{\Omega}^{(q)\top}+\sum_{q=0}^Q \mathbf{W}^{(q)} \mathbf{W}^{(q)\top} \\
& =\boldsymbol{\Omega}_{M}+\sum_{q=0}^Q \boldsymbol{\Omega}^{(q)} \mathbf{W}^{(q) \top}+\sum_{q=0}^Q \mathbf{W}^{(q)}\boldsymbol{\Omega}^{(q)\top}+\sum_{q=0}^Q \mathbf{W}^{(q)} \mathbf{W}^{(q)\top}.
\end{aligned}
$$
This gives
\begin{equation}\label{equtc1}
\begin{aligned}
&\left\|\mathbf{M} - \boldsymbol{\Omega}_{M}\right\|=  \left\|\sum_{q=0}^Q \boldsymbol{\Omega}^{(q)} \mathbf{W}^{(q)\top}+\sum_{q=0}^Q \mathbf{W}^{(q)} \boldsymbol{\Omega}^{(q)\top}+\sum_{q=0}^Q \mathbf{W}^{(q)} \mathbf{W}^{(q)\top}\right\| \\
\leqslant&  2 \sum_{q=0}^Q\left\|\boldsymbol{\Omega}^{(q)}\right\|\left\|\mathbf{W}^{(q)}\right\|+\sum_{q=0}^Q\left\|\mathbf{W}^{(q)}\right\|^2 \\
\leqslant &  2\left\|\boldsymbol{\theta}\right\|\left\|\boldsymbol{\delta}\right\| \sum_{q=1}^Q\left\|\mathbf{S}^{(q)}\right\|\left\|\mathbf{W}^{(q)}\right\|+2\left\|\boldsymbol{\theta}\right\|^2\left\|\mathbf{S}^{(0)}\right\|\left\|\mathbf{W}^{(0)}\right\|+\sum_{q=0}^Q\left\|\mathbf{W}^{(q)}\right\|^2\\
=& 2\left\|\boldsymbol{\theta}\right\|\left\|\boldsymbol{\delta}\right\| \sum_{q=1}^Q \sigma_{\max }\left(\mathbf{S}^{(q)}\right) \sigma_{\max }\left(\mathbf{W}^{(q)}\right)+\left\|\boldsymbol{\theta}\right\|^2 \sigma_{\max }\left(\mathbf{S}^{(0)}\right) \sigma_{\max }\left(\mathbf{W}^{(0)}\right)\\
&+\sum_{q=0}^Q \sigma_{\max }\left(\mathbf{W}^{(q)}\right)^2,
\end{aligned}
\end{equation}
where the second equality is directly from the definitions of $\boldsymbol{\Omega}^{(q)}, q=0,1,\ldots,Q$.

Next, our goal is to give an upper bound of the right-hand side of Equation \eqref{equtc1}. By Assumption~\ref{Assumption2}, without loss of generality, we assume $\sigma_{\max}(\mathbf{F}^{(q)}) \gg \sqrt{\log(n)Z}/(\left\|\boldsymbol{\theta}\right\|\left\|\boldsymbol{\delta}\right\|)$, then
\begin{equation}\label{Jan31:01}
 \begin{aligned}
   &\min(\left\|\boldsymbol{\theta}\right\|\left\|\boldsymbol{\delta}\right\|,  \left\|\boldsymbol{\theta}\right\|^2)\sigma_{\max}(\mathbf{S}^{(q)})\gtrsim \left\|\boldsymbol{\theta}\right\|\left\|\boldsymbol{\delta}\right\| (\sigma_{\min}(\boldsymbol{\Psi}_{\theta}))^2 \sigma_{\max}(\mathbf{F}^{(q)})\\
   \gtrsim&  \left\|\boldsymbol{\theta}\right\|\left\|\boldsymbol{\delta}\right\| \sigma_{\max}(\mathbf{F}^{(q)}) \gg  \sqrt{\log(n)Z} \gtrsim \max_{q=0}^Q\sigma_{\max}(\mathbf{W}^{(q)}),
\end{aligned}   
\end{equation}
where the first and second inequality are directly from Assumption~\ref{Assumption3}, the third inequality is from Assumption~\ref{Assumption2} and last inequality is from Lemma A.2 in \cite{wang2020spectral}, which shows $\max_{q=0}^Q\sigma_{\max}(\mathbf{W}^{(q)}) \lesssim \sqrt{\log (n) Z}$ with probability at least  $1-o\left(n^{-4}\right)$. 
Otherwise, we can show $\left\|\boldsymbol{\theta}\right\|^2 \sigma_{\max }\left(\mathbf{S}^{(0)}\right) \sigma_{\max }\left(\mathbf{W}^{(0)}\right) \gg \max_{q=0}^Q \sigma_{\max}\left(\mathbf{W}^{(q)}\right)$ similarly. Therefore, by \eqref{Jan31:01}, we can obtain 

$$
\begin{aligned}
\left\|\mathbf{M} - \boldsymbol{\Omega}_{M}\right\| \lesssim
\left\|\boldsymbol{\theta}\right\|\left\|\boldsymbol{\delta}\right\| \sqrt{\log (n) Z} \max\left\{ \max_{q=1}^Q\sigma_{\max }\left(\mathbf{F}^{(q)}\right), \lambda_{\max}(\mathbf{E}) \right\}
\end{aligned}
$$
with probability at least  $1-o\left(n^{-4}\right)$.
\end{proof}

\subsection{Proof of Theorem~\ref{Theorem1}}\label{Proof_th1}
First, we provide two key results.
\begin{lemma}\label{lemma3} For the ratio matrix $\mathbf{{R}} \in \mathbb{R}^{n \times (K-1)}$ derived from the eigenvectors of the population aggregated adjacency matrix $\boldsymbol{\Omega}_{\mathbf{M}}$, then for all $1 \leqslant i_1 \leqslant n$ and $1 \leqslant i_2 \leqslant n$, the following inequalities hold:
$$
\begin{aligned}
    \left\|\mathbf{R}_{\bar{i_{1}}}-\mathbf{R}_{\bar{i_{2}}}\right\| \geqslant 2 & \quad \text {if} \quad l_{i_{1}} \neq l_{i_{2}},\\
    \left\|\mathbf{R}_{\bar{i_{1}}}-\mathbf{R}_{\bar{i_{2}}}\right\|=0 & \quad \text {if} \quad l_{i_{1}}=l_{i_{2}},
\end{aligned}  
$$
where $l_i$ represents the community label of node $i$.
\end{lemma}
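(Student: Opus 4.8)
The plan is to exploit the explicit form of the population eigenvectors from Proposition~\ref{Proposition1}, namely $\mathbf{U}_{\bar{i}} = (\theta_i/\|\boldsymbol{\theta}^{(l_i)}\|)\mathbf{J}_{\bar{l_i}}$, and to observe that the SCORE normalization removes the degree-heterogeneity factor entirely. First I would write the population eigenvector entries as $\mathbf{u}_k(i) = (\theta_i/\|\boldsymbol{\theta}^{(l_i)}\|)\mathbf{J}(l_i,k)$. By Assumption~\ref{Assumption1} and the Perron--Frobenius theorem the leading eigenvector $\mathbf{u}_1$ is strictly positive, so $\mathbf{J}(l_i,1)>0$ and the ratio in \eqref{ratio-matrix} is well defined. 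Dividing the $(k+1)$-th eigenvector by the first kills the common factor:
$$
\frac{\mathbf{u}_{k+1}(i)}{\mathbf{u}_1(i)} = \frac{\mathbf{J}(l_i,k+1)}{\mathbf{J}(l_i,1)},
$$
which depends on $i$ only through its label $l_i$. The second assertion is then immediate: if $l_{i_1}=l_{i_2}$ every coordinate of $\mathbf{R}_{\bar{i_1}}$ and $\mathbf{R}_{\bar{i_2}}$ coincides, hence $\|\mathbf{R}_{\bar{i_1}}-\mathbf{R}_{\bar{i_2}}\|=0$.

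For the separation bound I would introduce, for each community $k$, the common (untruncated) ratio row $\mathbf{R}^{(k)}\in\mathbb{R}^{K-1}$ with $j$-th entry $\mathbf{J}(k,j+1)/\mathbf{J}(k,1)$, so that $\mathbf{R}_{\bar{i}}=\mathbf{R}^{(l_i)}$, and augment it with a leading coordinate equal to $1$ to form $\tilde{\mathbf{R}}^{(k)} = \mathbf{J}_{\bar{k}}/\mathbf{J}(k,1)\in\mathbb{R}^{K}$. Since the appended first coordinates are identical, $\|\mathbf{R}^{(k_1)}-\mathbf{R}^{(k_2)}\| = \|\tilde{\mathbf{R}}^{(k_1)}-\tilde{\mathbf{R}}^{(k_2)}\|$. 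The rows of the orthogonal matrix $\mathbf{J}$ are orthonormal, so expanding the square and using $\|\mathbf{J}_{\bar{k}}\|=1$ together with $\langle\mathbf{J}_{\bar{k_1}},\mathbf{J}_{\bar{k_2}}\rangle=0$ for $k_1\neq k_2$ gives
$$
\left\|\mathbf{R}^{(k_1)}-\mathbf{R}^{(k_2)}\right\|^2 = \frac{1}{\mathbf{J}(k_1,1)^2} + \frac{1}{\mathbf{J}(k_2,1)^2}.
$$
The crucial remaining ingredient is that $\mathbf{J}(k_1,1)^2+\mathbf{J}(k_2,1)^2\leq \sum_{k=1}^K \mathbf{J}(k,1)^2 = 1$, as these are two distinct entries of a unit-norm column of $\mathbf{J}$. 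Combining with the elementary inequality $1/x+1/y\geq 4/(x+y)$ yields
$$
\left\|\mathbf{R}^{(k_1)}-\mathbf{R}^{(k_2)}\right\|^2 \geq \frac{4}{\mathbf{J}(k_1,1)^2+\mathbf{J}(k_2,1)^2} \geq 4,
$$
so $\|\mathbf{R}_{\bar{i_1}}-\mathbf{R}_{\bar{i_2}}\|\geq 2$ whenever $l_{i_1}\neq l_{i_2}$, which is the first assertion.

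The step I expect to require the most care is reconciling this clean identity with the truncation at $T_n$ built into \eqref{ratio-matrix}, since coordinate-wise clipping is non-expansive and could in principle shrink the distance below $2$. I would dispatch this by noting that $K$ is fixed and, by Perron--Frobenius under Assumption~\ref{Assumption1}, each $\mathbf{J}(k,1)$ is bounded away from $0$ by a constant; hence every ratio $|\mathbf{J}(k,j+1)/\mathbf{J}(k,1)|$ is bounded by a constant while $T_n=\log(n)\to\infty$, so the truncation is inactive for all large $n$ and the population ratio row equals $\mathbf{R}^{(k)}$ exactly. One also checks that the sign factor in \eqref{ratio-matrix} merely restores the correct sign of the ratio, because $\mathbf{u}_1(i)>0$ makes $\operatorname{sgn}(\mathbf{u}_{k+1}(i))\,|\mathbf{u}_{k+1}(i)/\mathbf{u}_1(i)| = \mathbf{u}_{k+1}(i)/\mathbf{u}_1(i)$. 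With these two points settled, the two displayed inequalities complete the proof.
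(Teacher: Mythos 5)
Your proof is correct and shares its skeleton with the paper's own argument: both start from the explicit row representation $\mathbf{U}_{\bar{i}}=(\theta_i/\|\boldsymbol{\theta}^{(l_i)}\|)\mathbf{J}_{\bar{l_i}}$ of Proposition~\ref{Proposition1}, both use the augmentation trick of prepending the constant coordinate $\mathbf{u}_1(i)/\mathbf{u}_1(i)=1$ so that the $(K-1)$-dimensional ratio distance equals the $K$-dimensional distance $\bigl\|\mathbf{J}_{\bar{l_{i_1}}}/\mathbf{J}(l_{i_1},1)-\mathbf{J}_{\bar{l_{i_2}}}/\mathbf{J}(l_{i_2},1)\bigr\|$, and both exploit orthonormality of the rows of $\mathbf{J}$ to reduce this to $\mathbf{J}(k_1,1)^{-2}+\mathbf{J}(k_2,1)^{-2}$ (you also dispense with the rotation $\mathbf{O}_{\mathbf{U}^{\prime}}$ and sign $C_U$ that the paper carries along, which are norm-preserving and irrelevant at the population level). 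Where you genuinely depart from, and improve on, the paper is the final lower bound: the paper bounds the squared distance below by $2$ using only $|\mathbf{J}(k,1)|\leqslant 1$ (and its display moreover drops the squares in the denominators, an evident typo), which yields $\|\mathbf{R}_{\bar{i_1}}-\mathbf{R}_{\bar{i_2}}\|^2\geqslant 2$, i.e.\ a separation of only $\sqrt{2}$, strictly weaker than the stated $\geqslant 2$. Your extra observation that $\mathbf{J}(k_1,1)^2+\mathbf{J}(k_2,1)^2\leqslant\|\mathbf{j}_1\|^2=1$ for $k_1\neq k_2$, combined with $1/x+1/y\geqslant 4/(x+y)$, gives squared distance $\geqslant 4$ and hence exactly the claimed constant $2$; this in effect repairs the paper's proof of the lemma as stated (downstream, Theorem~\ref{Theorem1} only needs a separation bounded below by a fixed positive constant, so either version suffices there, but yours matches the lemma verbatim). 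You are also more careful on two points the paper leaves implicit: you check that the truncation at $T_n=\log(n)$ in \eqref{ratio-matrix} is asymptotically inactive at the population level because each $\mathbf{J}(k,1)$ is bounded away from zero under Assumption~\ref{Assumption1} (a fact the paper only establishes later, inside the proof of its eigenvector-localization lemma, via irreducibility and nonnegativity of $\bar{\mathbf{S}}$), and that the sign factor in \eqref{ratio-matrix} merely restores the signed ratio because $\mathbf{u}_1>0$. No gaps.
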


\begin{proposition}
\label{Proposition3} Under Assumptions \ref{Assumption1}, \ref{Assumption2} and \ref{Assumption3}, for the ratio matrix $\mathbf{R}$ and $\hat{\mathbf{R}}$, for $n$ large enough, with probability at least  $1-o\left(n^{-4}\right)$, we have 
\begin{equation}\label{Pro_3:equ1}
\|\hat{\mathbf{R}}-\mathbf{R}\|_F^2 \lesssim 
\frac{\log (n) Z  T_n^2}{\theta_{\min}^2\left\|\boldsymbol{\theta}\right\|^4}\left( \frac{\max\left\{ \max_{q=1}^Q\sigma_{\max }\left(\mathbf{F}^{(q)}\right), \lambda_{\max}(\mathbf{E}) \right\}}{\sum_{q =1}^Q \sigma^2_{\min }\left(\mathbf{F}^{(q)}\right) + \lambda^2_{\min }\left(\mathbf{E}\right)} \right)^2, 
\end{equation}
where $Z = \max \left(\theta_{\max }, \delta_{\max}\right) \max \left(\|\boldsymbol{\theta}\|_1,\|\boldsymbol{\delta}\|_1\right)$ and $T_n = \log(n)$.
\end{proposition}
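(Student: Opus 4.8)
The plan is to propagate the operator-norm control of $\mathbf{M}-\boldsymbol{\Omega}_M$ from Lemma~\ref{Proposition0} to a subspace perturbation bound via a Davis--Kahan argument, and then to transfer that bound through the SCORE normalization \eqref{ratio-matrix} row by row. Concretely I would proceed in four stages: (i) invoke Lemma~\ref{Proposition0} for $\|\mathbf{M}-\boldsymbol{\Omega}_M\|_{op}$ on the event of probability $1-o(n^{-4})$; (ii) bound the eigenvector perturbation $\|\hat{\mathbf{U}}\mathbf{O}-\mathbf{U}\|_F$ for a suitable orthogonal alignment $\mathbf{O}$; (iii) rewrite the eigengap in terms of the singular values of $\mathbf{F}^{(q)}$ and the eigenvalues of $\mathbf{E}$; and (iv) convert the eigenvector bound into the ratio-matrix bound. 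For stage (ii), Proposition~\ref{Proposition1} shows that $\boldsymbol{\Omega}_M$ has rank $K$, so its eigengap at level $K$ equals its smallest nonzero eigenvalue $\lambda_K(\boldsymbol{\Omega}_M)=\|\boldsymbol{\theta}\|^2\|\boldsymbol{\delta}\|^2\lambda_{\min}(\bar{\mathbf{S}})$; with $K$ fixed, the Davis--Kahan $\sin\Theta$ theorem yields $\|\hat{\mathbf{U}}\mathbf{O}-\mathbf{U}\|_F \lesssim \|\mathbf{M}-\boldsymbol{\Omega}_M\|_{op}/\big(\|\boldsymbol{\theta}\|^2\|\boldsymbol{\delta}\|^2\lambda_{\min}(\bar{\mathbf{S}})\big)$, where Assumptions~\ref{Assumption2} and \ref{Assumption3} ensure the perturbation is a small fraction of the gap.

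For stage (iii), recall from \eqref{equ3} that $\bar{\mathbf{S}}=(\|\boldsymbol{\theta}\|^2/\|\boldsymbol{\delta}\|^2)\mathbf{S}^{(0)}\mathbf{S}^{(0)\top}+\sum_{q=1}^Q\mathbf{S}^{(q)}\mathbf{S}^{(q)\top}$ is a sum of positive semidefinite matrices, so $\lambda_{\min}(\bar{\mathbf{S}})$ is at least the sum of the individual smallest eigenvalues. Under Assumption~\ref{Assumption3} the diagonal matrices $\boldsymbol{\Psi}_{\boldsymbol{\theta}},\boldsymbol{\Psi}_{\boldsymbol{\delta}}$ have entries of constant order and $\|\boldsymbol{\theta}\|\asymp\|\boldsymbol{\delta}\|$, whence $\sigma_{\min}(\mathbf{S}^{(q)})\asymp\sigma_{\min}(\mathbf{F}^{(q)})$ and $\sigma_{\min}(\mathbf{S}^{(0)})\asymp\lambda_{\min}(\mathbf{E})$ (using symmetry of $\mathbf{S}^{(0)}$). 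This gives $\lambda_{\min}(\bar{\mathbf{S}})\gtrsim\sum_{q=1}^Q\sigma_{\min}^2(\mathbf{F}^{(q)})+\lambda_{\min}^2(\mathbf{E})$, and inserting the numerator from Lemma~\ref{Proposition0} produces $\|\hat{\mathbf{U}}\mathbf{O}-\mathbf{U}\|_F^2 \lesssim (\log(n)Z/\|\boldsymbol{\theta}\|^4)\,(\operatorname{SNR}^{\operatorname{BASIC}})^{-2}$.

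For stage (iv) I would analyze \eqref{ratio-matrix} one row at a time. By Proposition~\ref{Proposition1} and the Perron--Frobenius positivity guaranteed by Assumption~\ref{Assumption1}, the population leading entries satisfy $\mathbf{u}_1(i)\asymp\theta_i/\|\boldsymbol{\theta}\|\geq\theta_{\min}/\|\boldsymbol{\theta}\|$, so the SCORE denominator stays bounded away from zero, and $|\hat{\mathbf{u}}_1(i)|$ inherits the same lower bound once the perturbation is small. A quotient-perturbation inequality for $\hat{\mathbf{u}}_{k+1}(i)/\hat{\mathbf{u}}_1(i)-\mathbf{u}_{k+1}(i)/\mathbf{u}_1(i)$, in which the truncation at $T_n$ caps the amplification factor $|\mathbf{R}(i,k)|$, reduces each row difference to $\|\hat{\mathbf{R}}_{\bar{i}}-\mathbf{R}_{\bar{i}}\| \lesssim (T_n/\mathbf{u}_1(i))\,\|(\hat{\mathbf{U}}\mathbf{O}-\mathbf{U})_{\bar{i}}\|$; summing the squares over $i$, using $\mathbf{u}_1(i)\gtrsim\theta_{\min}/\|\boldsymbol{\theta}\|$, and inserting the stage-(iii) bound yields the claimed rate $\|\hat{\mathbf{R}}-\mathbf{R}\|_F^2\lesssim \log(n)Z\,T_n^2\,(\theta_{\min}^2\|\boldsymbol{\theta}\|^4)^{-1}(\operatorname{SNR}^{\operatorname{BASIC}})^{-2}$.

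The hard part will be stage (iv), where three difficulties intertwine. First, one must handle a quotient in which \emph{both} numerator and denominator are perturbed, which is controllable only through the truncation level $T_n$ together with the uniform lower bound on $|\hat{\mathbf{u}}_1(i)|$. Second, the orthogonal alignment $\mathbf{O}$ delivered by Davis--Kahan must be reconciled with the fact that SCORE divides by the raw leading eigenvector, whose sign is pinned by Perron--Frobenius positivity rather than free to rotate, so the numerator and denominator perturbations must be tracked consistently. Third, and most delicate, extracting the exact power of $\theta_{\min}$ and $\|\boldsymbol{\theta}\|$ in the final bound requires a sharp row-wise (rather than purely Frobenius) treatment of the eigenvector perturbation, since a crude denominator bound over-weights the low-degree rows. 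By contrast, stages (i)--(iii) are comparatively routine given Lemma~\ref{Proposition0} and Proposition~\ref{Proposition1}.
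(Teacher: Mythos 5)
Your stages (i)--(iii) coincide with the paper's own route (its Lemma~\ref{Proposition2}): the operator-norm bound from Lemma~\ref{Proposition0}, a Davis--Kahan/Lei--Rinaldo perturbation bound with eigengap $\lambda_{\min}(\boldsymbol{\Omega}_M)$, and the lower bound $\lambda_{\min}(\bar{\mathbf{S}}) \gtrsim \sum_{q}\sigma^2_{\min}(\mathbf{F}^{(q)})+\lambda^2_{\min}(\mathbf{E})$ obtained from the sum of positive semidefinite terms together with the constant-order $\boldsymbol{\Psi}$ matrices. The genuine gap is in stage (iv). You assert that ``$|\hat{\mathbf{u}}_1(i)|$ inherits the same lower bound once the perturbation is small,'' but the only control available is $\|\hat{\mathbf{u}}_1-\mathbf{u}_1 C_U\|$ in the aggregate $\ell_2$ sense; Davis--Kahan gives no entrywise ($\ell_\infty$) control, so individual entries $\hat{\mathbf{u}}_1(i)$ can be arbitrarily close to zero, or of the wrong sign, even on the high-probability event. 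Consequently your uniform row-wise inequality $\|\hat{\mathbf{R}}_{\bar i}-\mathbf{R}_{\bar i}\| \lesssim (T_n/\mathbf{u}_1(i))\,\|(\hat{\mathbf{U}}\mathbf{O}-\mathbf{U})_{\bar i}\|$ is not justified as you derive it, because the quotient-perturbation step needs a lower bound on the \emph{empirical} denominator $\hat{\mathbf{u}}_1(i)$, which you do not have. The paper resolves precisely this point by partitioning the nodes: it defines the good set $\mathcal{V}_U$ on which $\hat{\mathbf{u}}_1(i)$ has bounded relative error, bounds $|\mathcal{V}\backslash\mathcal{V}_U|$ by a Markov-type counting argument weighted by $\|\boldsymbol{\theta}\|^2/\theta_{\min}^2$ (Lemma~\ref{Lemma3}), pays the crude bound $\|\hat{\mathbf{R}}_{\bar i}-\mathbf{R}_{\bar i}\|^2 \lesssim K T_n^2$ on the bad rows --- which is the \emph{only} place the truncation $T_n$ enters --- and applies the quotient inequality (Lemma~\ref{Lemma4}) only on $\mathcal{V}_U$, where $|\hat{\mathbf{u}}_1(i)| \asymp \theta_i/\|\boldsymbol{\theta}\|$ genuinely holds.

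Your row-wise bound can in fact be salvaged by a case split --- when $|\hat{\mathbf{u}}_1(i)| \geq |\mathbf{u}_1(i)|/2$ the quotient bound applies without truncation, and when $|\hat{\mathbf{u}}_1(i)| < |\mathbf{u}_1(i)|/2$ one has $\|(\hat{\mathbf{U}}\mathbf{O}-\mathbf{U})_{\bar i}\| \geq |\mathbf{u}_1(i)|/2$ while truncation caps the left side by roughly $\sqrt{K}\,T_n$ --- but that case split is the paper's good/bad decomposition in disguise, and your write-up instead leans on the false uniform entrywise bound. A second, smaller gap is the alignment issue you name among the ``hard parts'' but never resolve: a single $K\times K$ rotation $\mathbf{O}$ from Davis--Kahan would mix $\hat{\mathbf{u}}_1$ into the other columns, invalidating the row-wise ratios since SCORE divides by the raw leading eigenvector. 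The paper avoids this by aligning $\hat{\mathbf{u}}_1$ only through a sign $C_U \in \{-1,1\}$ and rotating the remaining $K-1$ columns by a separate $\mathbf{O}_{\mathbf{U}} \in \mathbb{R}^{(K-1)\times(K-1)}$, which implicitly uses the gap between $\lambda_1(\boldsymbol{\Omega}_M)$ and $\lambda_2(\boldsymbol{\Omega}_M)$ in addition to $\lambda_{\min}(\boldsymbol{\Omega}_M)$. Your appeal to Perron--Frobenius for the population lower bound $\mathbf{u}_1(i) \gtrsim \theta_{\min}/\|\boldsymbol{\theta}\|$ is correct and matches the paper's verification that $\bar{\mathbf{S}}$ is irreducible and nonnegative.
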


The proofs of these two results defer to the end of this appendix. Now we focus on the main result.
\begin{proof}
We aim to bound the distance between $\mathbf{N}^*$ and the ratio matrix $\mathbf{{R}}$ constructed from the eigenvectors of the expected aggregated adjacency matrix $\boldsymbol{\Omega}_{M}$. By the definition of the matrix $\mathbf{N}^*=\underset{\mathbf{N} \in \mathcal{N}_{n,  k-1, K}}{\operatorname{argmin}}\|\mathbf{N}-\hat{\mathbf{R}}\|_F^2$. Note that $\mathcal{N}_{n, K-1, K}$ denotes the set of $n \times(K-1)$ matrices with only $K$ different rows. Recall that $\mathbf{R}$ is also a $n \times(K-1)$ matrices with only $K$ different rows, thus $\mathbf{R} \in \mathcal{N}_{n, K-1, K}$. We can get $\|\mathbf{N}^*-\hat{\mathbf{R}}\| \leqslant\|\mathbf{R}-\hat{\mathbf{R}}\|$. Then, we obtain
$$
\begin{aligned}
\left\|\mathbf{N}^*-\mathbf{R}\right\|_F^2 & \leqslant\|\mathbf{N}^*-\hat{\mathbf{R}}+\hat{\mathbf{R}}-\mathbf{R}\|_F^2 \lesssim \|\mathbf{N}^*-\hat{\mathbf{R}}\|_F^2+\|\hat{\mathbf{R}}-\mathbf{R}\|_F^2 \\
& \lesssim \|\mathbf{R}-\hat{\mathbf{R}}\|_F^2+\|\hat{\mathbf{R}}-\mathbf{R}\|_F^2 \quad \text{(by $\|\mathbf{N}^*-\hat{\mathbf{R}}\| \leqslant\|\mathbf{R}-\hat{\mathbf{R}}\|$)} \\
& \lesssim \|\mathbf{R}-\hat{\mathbf{R}}\|_F^2 \\
& \lesssim \frac{\log (n) Z  T_n^2}{\theta_{\min}^2\left\|\boldsymbol{\theta}\right\|^4}\left( \frac{\max\left\{ \max_{q=1}^Q\sigma_{\max }\left(\mathbf{F}^{(q)}\right), \lambda_{\max}(\mathbf{E}) \right\}}{\sum_{q =1}^Q \sigma^2_{\min }\left(\mathbf{F}^{(q)}\right) + \lambda^2_{\min }\left(\mathbf{E}\right)} \right)^2 \quad \text{(by Proposition~\ref{Proposition3})}, 
\end{aligned}
$$
where $Z = \max \left(\theta_{\max }, \delta_{\max}\right) \max \left(\|\boldsymbol{\theta}\|_1,\|\boldsymbol{\delta}\|_1\right)$, and $T_n = \log(n)$.

Define $\mathcal{W} \equiv\{1 \leqslant i \leqslant$ $\left.n:\left\|\mathbf{N}_{\bar{i}}^*-\mathbf{R}_{\bar{i}}\right\| \leqslant \frac{1}{2}\right\}$. Assume that nodes $i_{1}$ and $i_2$ belong to different communities in the set $\mathcal{W}$, i.e., $l_{i_{1}} \neq l_{i_{2}}$. According to Lemma~\ref{lemma3}, we have $\|\mathbf{R}_{\bar{i_{1}}}-\mathbf{R}_{\bar{i_{2}}}\|\geqslant 2$. We can obtain that for $\bar{i_{1}}, \bar{i_{2}} \in \mathcal{W}$,
$$
\begin{aligned}
\left\|\mathbf{N}_{\bar{i_{1}}}^*-\mathbf{N}_{\bar{i_{2}}}^*\right\| & =\left\|\mathbf{N}_{\bar{i_{1}}}^*-\mathbf{R}_{\bar{i_{1}}}+\mathbf{R}_{\bar{i_{1}}}-\mathbf{R}_{\bar{i_{2}}}+\mathbf{R}_{\bar{i_{2}}}-\mathbf{N}_{\bar{i_{2}}}^*\right\| \\
& \geqslant\left\|\mathbf{R}_{\bar{i_{1}}}-\mathbf{R}_{\bar{i_{2}}}\right\|-\left\|\mathbf{N}_{\bar{i_{1}}}^*-\mathbf{R}_{\bar{i_{1}}}+\mathbf{R}_{\bar{i_{2}}}-\mathbf{N}_{\bar{i_{2}}}^*\right\| \\
& \geqslant\left\|\mathbf{R}_{\bar{i_{1}}}-\mathbf{R}_{\bar{i_{2}}}\right\|-\left\|\mathbf{N}_{\bar{i_{1}}}^*-\mathbf{R}_{\bar{i_{1}}}\right\|-\left\|\mathbf{N}_{\bar{i_{2}}}^*-\mathbf{R}_{\bar{i_{2}}}\right\|\\
& \geqslant 2- \frac{1}{2}-\frac{1}{2} = 1\quad \text{(by the definition of $\mathcal{W}$)}.\\
\end{aligned}
$$
Then, by the same deduction as Theorem 1 in \cite{wang2020spectral}, we can show
$$
\begin{aligned}
  |\mathcal{V} \backslash \mathcal{W}| \lesssim&  \frac{\log (n) Z  T_n^2}{\theta_{\min}^2\left\|\boldsymbol{\theta}\right\|^4}\left( \frac{\max\left\{ \max_{q=1}^Q\sigma_{\max }\left(\mathbf{F}^{(q)}\right), \lambda_{\max}(\mathbf{E}) \right\}}{\sum_{q =1}^Q \sigma^2_{\min }\left(\mathbf{F}^{(q)}\right) + \lambda^2_{\min }\left(\mathbf{E}\right)} \right)^2\\
  = &\frac{\log (n) Z  T_n^2}{\theta_{\min}^2\left\|\boldsymbol{\theta}\right\|^4} (\operatorname{SNR}^{\operatorname{BASIC}})^{-2}.  
\end{aligned}
$$

\end{proof}

\subsection{Proof of Lemma~\ref{lemma3}}
By the definition of the ratio matrix $\mathbf{{R}}$, we can obtain that 
$$
\begin{aligned}
\left\|\mathbf{R}_{\bar{i_{1}}}-\mathbf{R}_{\bar{i_{2}}}\right\|^2 
& =\left\|\frac{\left(\mathbf{U}_{2 \sim K} \mathbf{O}_{\mathbf{U}^{\prime}}\right)_{\bar{i_{1}}}}{C_U \mathbf{u}_1(i_{1})}-\frac{\left(\mathbf{U}_{2 \sim K} \mathbf{O}_{\mathbf{U}^{\prime}}\right)_{\bar{i_{2}}}}{C_U \mathbf{u}_1(i_{2})}\right\|^2\\
& =\left\|\frac{\left(\mathbf{U}_{2 \sim K}\right)_{\bar{i_{1}}}}{\mathbf{u}_1(i_{1})}-\frac{\left(\mathbf{U}_{2 \sim K}\right)_{\bar{i_{2}}}}{\mathbf{u}_1(i_{2})}\right\|^2+\left\|\frac{\mathbf{u}_1(i_{1})}{\mathbf{u}_1(i_{1})}-\frac{\mathbf{u}_1(i_{2})}{\mathbf{u}_1(i_{2})}\right\|^2 \\
& =\left\|\frac{\mathbf{U}_{\bar{i_{1}}}}{\mathbf{u}_1(i_{1})}-\frac{\mathbf{U}_{\bar{i_{2}}}}{\mathbf{u}_1(i_{2})}\right\|^2\\
& =\left\|\frac{\mathbf{J}_{\bar{l_{i_{1}}}}}{\mathbf{J}_{\bar{l_{i_{1}}}}(1)}-\frac{\mathbf{J}_{\bar{l_{i_{2}}}}}{\mathbf{J}_{\bar{l_{i_{2}}}}(1)}\right\|^2 \quad \text{(by Proposition~\ref{Proposition1})}. \\
\end{aligned}
$$
Therefore, if $l_{i_{1}} = l_{i_{2}}$, we have $\mathbf{J}_{\bar{l_{i_{1}}}} = \mathbf{J}_{\bar{l_{i_{2}}}}$, thus, $\left\|\mathbf{R}_{\bar{i_{1}}}-\mathbf{R}_{\bar{i_{2}}}\right\|^2 =0$. On the other hand, if $l_{i_{1}} \ne l_{i_{2}}$, by noting Proposition~\ref{Proposition1}, it is seen that the matrix $\mathbf{J}$ is orthogonal matrix, thus $\left\| \mathbf{J} \right\| =1$ and $\left\langle \mathbf{J}_{\bar{l_{i_{1}}}},\mathbf{J}_{\bar{l_{i_{2}}}}\right\rangle = 0$ for $i_{1} \neq i_{2}$. Thus, if $l_{i_{1}} \neq l_{i_{2}}$, we have $\left\|\mathbf{R}_{\bar{i_{1}}}-\mathbf{R}_{\bar{i_{2}}}\right\|^2  = 1/{|\mathbf{J}_{\bar{l_{i_{1}}}}(1)|}+{1}/{|\mathbf{J}_{\bar{l_{i_{2}}}}(1)|} \geqslant 1+1 = 2$.

\section{Proof of Proposition~\ref{Proposition3}}\label{Appendix0}

We first provide some lemmas that facilitate technical proofs.

\begin{lemma}\label{Proposition2} Let the first $K$ leading eigenvectors of $\mathbf{M}$ as $\hat{\mathbf{U}} = \left[\hat{\mathbf{u}}_1, \hat{\mathbf{u}}_2,\ldots,\hat{\mathbf{u}}_K\right] \in \mathbb{R}^{n \times K}$, and the first $K$ leading eigenvectors of $\boldsymbol{\Omega}_{M}$ as ${\mathbf{U}} = \left[\mathbf{{u}}_1, \mathbf{{u}}_2,\ldots,\mathbf{{u}}_K\right] \in \mathbb{R}^{n \times K}$. Under Assumption~\ref{Assumption3} and~\ref{Assumption2}, there exist an orthogonal matrix $\mathbf{O}_{\mathbf{U}} \in \mathbb{R}^{(K-1) \times (K-1)}$ and a constant $C_{U} \in \{-1,1\}$, such that for $n$ large enough, with probability at least $1-o\left(n^{-4}\right)$, the following bounds hold
\begin{equation}\label{Pro_2:equ1}
\begin{aligned}
&\max\left\{ \|\hat{\mathbf{U}}_{2 \sim K}-\mathbf{U}_{2 \sim K}\mathbf{O}_{\mathbf{U}} \|_F, \|\hat{\mathbf{u}}_1-\mathbf{u}_1 C_{U} \|_F \right\}\\
\lesssim& \frac{\sqrt{\log (n) Z}}{\left\|\boldsymbol{\theta}\right\|^2} \left( \frac{\max\left\{ \max_{q=1}^Q\sigma_{\max }\left(\mathbf{F}^{(q)}\right), \lambda_{\max}(\mathbf{E}) \right\}}{\sum_{q =1}^Q \sigma^2_{\min }\left(\mathbf{F}^{(q)}\right) + \lambda^2_{\min }\left(\mathbf{E}\right)} \right),
\end{aligned}
\end{equation}
where 
$Z = \max \left(\theta_{\max }, \delta_{\max}\right) \max \left(\|\boldsymbol{\theta}\|_1,\|\boldsymbol{\delta}\|_1\right)$.
\end{lemma}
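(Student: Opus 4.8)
The plan is to read \eqref{Pro_2:equ1} as a two-sided eigenvector perturbation bound relating the leading eigenvectors of the sample matrix $\mathbf{M}$ to those of its population counterpart $\boldsymbol{\Omega}_M$, and therefore to run a Davis--Kahan $\sin\Theta$ argument in its Frobenius-norm form (e.g. the Yu--Wang--Samworth variant), which controls $\|\hat{\mathbf{U}}-\mathbf{U}\mathbf{O}\|_F$ by the ratio $\|\mathbf{M}-\boldsymbol{\Omega}_M\|_{op}/\mathrm{gap}$. The numerator is already delivered by Lemma~\ref{Proposition0}, so the whole argument reduces to two tasks: (i) producing a sharp lower bound on the relevant spectral gap of $\boldsymbol{\Omega}_M$, and (ii) upgrading the single $K\times K$ alignment matrix that Davis--Kahan returns into the claimed block form $\mathrm{diag}(C_U,\mathbf{O}_U)$ with $C_U\in\{-1,1\}$.

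For the gap I would start from Proposition~\ref{Proposition1}, which gives $\lambda_i(\boldsymbol{\Omega}_M)=\|\boldsymbol{\theta}\|^2\|\boldsymbol{\delta}\|^2\lambda_i(\bar{\mathbf{S}})$ for $i\le K$ and $0$ otherwise, so the separation between the top-$K$ eigenspace and its complement is $\lambda_K(\boldsymbol{\Omega}_M)=\|\boldsymbol{\theta}\|^2\|\boldsymbol{\delta}\|^2\lambda_{\min}(\bar{\mathbf{S}})$. The quantitative core is lower-bounding $\lambda_{\min}(\bar{\mathbf{S}})$. Since $\bar{\mathbf{S}}=(\|\boldsymbol{\theta}\|^2/\|\boldsymbol{\delta}\|^2)\mathbf{S}^{(0)}\mathbf{S}^{(0)\top}+\sum_q\mathbf{S}^{(q)}\mathbf{S}^{(q)\top}$ is a sum of positive semidefinite matrices, the superadditivity $\lambda_{\min}(A+B)\ge\lambda_{\min}(A)+\lambda_{\min}(B)$ gives $\lambda_{\min}(\bar{\mathbf{S}})\ge(\|\boldsymbol{\theta}\|^2/\|\boldsymbol{\delta}\|^2)\sigma^2_{\min}(\mathbf{S}^{(0)})+\sum_q\sigma^2_{\min}(\mathbf{S}^{(q)})$. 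Writing $\mathbf{S}^{(0)}=\boldsymbol{\Psi}_\theta\mathbf{E}\boldsymbol{\Psi}_\theta$ and $\mathbf{S}^{(q)}=\boldsymbol{\Psi}_\theta\mathbf{F}^{(q)}\boldsymbol{\Psi}_\delta$, I would use that Assumption~\ref{Assumption3} forces $\boldsymbol{\Psi}_\theta,\boldsymbol{\Psi}_\delta$ to be well-conditioned (their diagonal entries are of constant order because the community-wise norms are comparable), whence $\sigma_{\min}(\mathbf{S}^{(0)})\gtrsim\lambda_{\min}(\mathbf{E})$, $\sigma_{\min}(\mathbf{S}^{(q)})\gtrsim\sigma_{\min}(\mathbf{F}^{(q)})$, and $\|\boldsymbol{\theta}\|\asymp\|\boldsymbol{\delta}\|$. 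This yields $\lambda_K(\boldsymbol{\Omega}_M)\gtrsim\|\boldsymbol{\theta}\|^2\|\boldsymbol{\delta}\|^2\bigl(\lambda^2_{\min}(\mathbf{E})+\sum_q\sigma^2_{\min}(\mathbf{F}^{(q)})\bigr)$, exactly the denominator of \eqref{Pro_2:equ1}; dividing the Lemma~\ref{Proposition0} bound by it and cancelling $\|\boldsymbol{\theta}\|\|\boldsymbol{\delta}\|\asymp\|\boldsymbol{\theta}\|^2$ reproduces the stated rate. Before invoking Davis--Kahan I would also check, from Assumptions~\ref{Assumption2} and~\ref{Assumption3}, that $\|\mathbf{M}-\boldsymbol{\Omega}_M\|_{op}$ is of strictly smaller order than $\lambda_K(\boldsymbol{\Omega}_M)$, so the empirical $(K+1)$-th eigenvalue stays below the gap and the perturbation theorem applies.

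The harder part is (ii): the top-$K$ $\sin\Theta$ bound only yields a single orthogonal $\mathbf{O}$, whereas the statement isolates the first eigenvector up to a sign $C_U$ and rotates only the remaining $K-1$ columns by $\mathbf{O}_U$. To justify this block-diagonal structure I would argue that the leading eigenvalue $\lambda_1(\boldsymbol{\Omega}_M)$ is simple and well-separated from $\lambda_2(\boldsymbol{\Omega}_M)$ — a consequence of the Perron--Frobenius structure guaranteed by the irreducibility in Assumption~\ref{Assumption1} (the same fact used earlier to make the ratio matrix well-defined) — so that the one-dimensional Perron eigenspace does not mix with $\mathrm{span}(\mathbf{u}_2,\dots,\mathbf{u}_K)$ under perturbation. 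Concretely I would apply $\sin\Theta$ twice: once to the $1$-dimensional eigenspace of $\lambda_1$ (gap $\lambda_1-\lambda_2$), giving $\hat{\mathbf{u}}_1=C_U\mathbf{u}_1+o(1)$, and once to the $(K-1)$-dimensional eigenspace $\mathrm{span}(\mathbf{u}_2,\dots,\mathbf{u}_K)$ (gap $\min\{\lambda_1-\lambda_2,\ \lambda_K\}$), giving $\hat{\mathbf{U}}_{2\sim K}=\mathbf{U}_{2\sim K}\mathbf{O}_U+o(1)$. The main obstacle — and the step I expect to require the most care — is showing $\lambda_1-\lambda_2\gtrsim\lambda_K(\boldsymbol{\Omega}_M)$, so that \emph{both} gaps are governed by the single bottom gap and both error terms collapse to the common rate in \eqref{Pro_2:equ1}; this separation of the Perron eigenvalue, which in the balanced regime of Assumption~\ref{Assumption3} is of strictly larger order than $\lambda_K$, is what lets the final $\max$ of the two pieces be absorbed into one bound.
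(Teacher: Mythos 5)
Your proposal is correct and follows essentially the same route as the paper's own proof: there, too, the bound is obtained by dividing the operator-norm estimate of Lemma~\ref{Proposition0} by a spectral gap lower-bounded via the superadditivity $\lambda_{\min}\bigl(\sum_{q=0}^Q\boldsymbol{\Omega}^{(q)}\boldsymbol{\Omega}^{(q)\top}\bigr)\ge \|\boldsymbol{\theta}\|^2\|\boldsymbol{\delta}\|^2\sum_{q=1}^Q\sigma^2_{\min}(\mathbf{S}^{(q)})+\|\boldsymbol{\theta}\|^4\sigma^2_{\min}(\mathbf{S}^{(0)})$, followed by the same Assumption~\ref{Assumption3} cancellations ($\|\boldsymbol{\theta}\|\asymp\|\boldsymbol{\delta}\|$, well-conditioned $\boldsymbol{\Psi}_{\boldsymbol{\theta}},\boldsymbol{\Psi}_{\boldsymbol{\delta}}$). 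The only difference is bookkeeping: the block alignment $(C_U,\mathbf{O}_{\mathbf{U}})$ and the reduction of the gap $\min\{\lambda_{\min}(\boldsymbol{\Omega}_M),\,\lambda_1(\boldsymbol{\Omega}_M)-\lambda_2(\boldsymbol{\Omega}_M)\}$ to $\lambda_{\min}(\boldsymbol{\Omega}_M)$ --- exactly the step you flag as the delicate one --- are imported wholesale from Lemma 5.1 of \cite{lei2015consistency}, Lemma 4 of \cite{wang2020spectral}, and the deduction (A.15) therein, rather than re-derived through two separate $\sin\Theta$ applications as you propose.
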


\begin{proof}
By applying Lemma 5.1 in \cite{lei2015consistency} and Lemma 4 in \cite{wang2020spectral}, there exists an orthogonal matrix $\mathbf{O}_{\mathbf{U}} \in \mathbb{R}^{(K-1) \times (K-1)}$, the distance (defined in Frobenius norm) between eigenvectors of aggregated square matrix $\mathbf{M}$ and its population version $\boldsymbol{\Omega}_{M}$ is bounded by spectral norm of noise matrix, i.e., 
\begin{equation}\label{leiequation}
\begin{aligned}
   &\max\left\{ \|\hat{\mathbf{U}}_{2 \sim K}-\mathbf{U}_{2 \sim K}\mathbf{O}_{\mathbf{U}} \|_F, \|\hat{\mathbf{u}}_1-\mathbf{u}_1 C_{\mathbf{U}} \|_F \right\}\\
    \leqslant& \frac{2 \sqrt{2 K}}{\min\left\{\lambda_{\min}(\boldsymbol{\Omega}_{M}), \lambda_{1}(\boldsymbol{\Omega}_{M})-\lambda_{2}(\boldsymbol{\Omega}_{M})\right\}}\left\|\mathbf{M} - \boldsymbol{\Omega}_{M}\right\|\\
    \leqslant& \frac{2 \sqrt{2 K}}{\lambda_{\min}(\boldsymbol{\Omega}_{M})}\left\|\mathbf{M} - \boldsymbol{\Omega}_{M}\right\|,
\end{aligned}
\end{equation}
where the last inequality is based on the same deduction as (A.15) in \cite{wang2020spectral}. Additionally, $K$ is assumed to be independent of $n$ and it is a constant. By Lemma~\ref{Proposition0}, we can obtain that with probability $1 - O(n^{-4})$,
\begin{equation}\label{sun:1}
    \begin{aligned}
     &\max\left\{ \|\hat{\mathbf{U}}_{2 \sim K}-\mathbf{U}_{2 \sim K}\mathbf{O}_{\mathbf{U}} \|_F, \|\hat{\mathbf{u}}_1-\mathbf{u}_1 C_{\mathbf{U}} \|_F \right\}\\
      \lesssim 
      &\frac{\left\|\mathbf{M} - \boldsymbol{\Omega}_{M}\right\|}{\lambda_{\min}\left(\sum_{q=1}^{Q} \boldsymbol{\Omega}^{(q)} \boldsymbol{\Omega}^{(q)\top}\right)} \\
    \lesssim &\frac{\left\|\boldsymbol{\theta}\right\|\left\|\boldsymbol{\delta}\right\| \sqrt{\log (n) Z} \max\left\{ \max_{q=1}^Q\sigma_{\max }\left(\mathbf{F}^{(q)}\right), \lambda_{\max}(\mathbf{E}) \right\}}{\left\|\boldsymbol{\theta}\right\|^2\left\|\boldsymbol{\delta}\right\|^2 \sum_{q=1}^Q \sigma^2_{\min }\left(\mathbf{S}^{(q)}\right)+\left\|\boldsymbol{\theta}\right\|^4 \sigma^2_{\min }\left(\mathbf{S}^{(0)}\right)} \\
     \lesssim &\frac{\sqrt{\log (n) Z}}{\left\|\boldsymbol{\theta}\right\|^2} \left( \frac{\max\left\{ \max_{q=1}^Q\sigma_{\max }\left(\mathbf{F}^{(q)}\right), \lambda_{\max}(\mathbf{E}) \right\}}{\sum_{q = 1}^Q \sigma^2_{\min }\left(\mathbf{F}^{(q)}\right) + \lambda^2_{\min }\left(\mathbf{E}\right)} \right),
\end{aligned}  
\end{equation}
where the second inequality we use the fact that
$$
\begin{aligned}
 \lambda_{\min}\left(\sum_{q=0}^Q \boldsymbol{\Omega}^{(q)} \boldsymbol{\Omega}^{(q)\top}\right) \ge& \sum_{q=0}^q \lambda_{\min}\left(\boldsymbol{\Omega}^{(q)} \boldsymbol{\Omega}^{(q)\top}\right)\\
 =& \left\|\boldsymbol{\theta}\right\|^2\left\|\boldsymbol{\delta}\right\|^2 \sum_{q =1}^Q \sigma^2_{\min }\left(\mathbf{S}^{(q)}\right)+\left\|\boldsymbol{\theta}\right\|^4 \sigma^2_{\min }\left(\mathbf{S}^{(0)}\right),   
\end{aligned}
$$
and Proposition~\ref{Proposition0}, and the last inequality is from Assumption~\ref{Assumption3} where $\|\boldsymbol{\theta}\| \asymp \|\boldsymbol{\delta}\|$.
\end{proof}

For a constant $0 < C < 1$, we define $\mathcal{V}_U \equiv\left(1 \leqslant i \leqslant n ;\left|\frac{\hat{\mathbf{u}}_1(i)}{C_U \mathbf{u}_1(i)}-1\right| \leqslant C\right)$. 

\begin{lemma}\label{Lemma3} For the nodes in $\mathcal{V}_U$, we have the following equations hold, 
$$
\left|\hat{\mathbf{u}}_1(i)\right| \asymp\left|C_U \mathbf{u}_1(i)\right| \asymp \frac{{\theta}_i}{\|\boldsymbol{\theta}\|} \quad \text {for} \quad i \in \mathcal{V}_U.
$$
Moreover, with a probability of at least $1-O\left(n^{-4}\right)$, the cardinalities of $\mathcal{V} \backslash \mathcal{V}_U$ satisfies
$$
\left|\mathcal{V} \backslash \mathcal{V}_U\right| \lesssim \frac{\log (n) Z }{\theta_{\min}^2\left\|\boldsymbol{\theta}\right\|^4}\left( \frac{\max\left\{ \max_{q=1}^Q\sigma_{\max }\left(\mathbf{F}^{(q)}\right), \lambda_{\max}(\mathbf{E}) \right\}}{\sum_{q =1}^Q \sigma^2_{\min }\left(\mathbf{F}^{(q)}\right) + \lambda^2_{\min }\left(\mathbf{E}\right)} \right)^2.
$$
\end{lemma}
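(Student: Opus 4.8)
The plan is to first pin down the exact order of magnitude of the entries of the population leading eigenvector $\mathbf{u}_1$, and only then exploit the defining inequality of $\mathcal{V}_U$ together with the perturbation bound of Lemma~\ref{Proposition2}. By Proposition~\ref{Proposition1}, equation~\eqref{Pro_1:equ2}, each row satisfies $\mathbf{U}_{\bar{i}}=(\theta_i/\|\boldsymbol{\theta}^{(l_i)}\|)\mathbf{J}_{\bar{l_i}}$, so in particular $\mathbf{u}_1(i)=(\theta_i/\|\boldsymbol{\theta}^{(l_i)}\|)\,\mathbf{J}_{\bar{l_i}}(1)$, where $\mathbf{J}_{\bar{l_i}}(1)$ is the $l_i$-th coordinate of the first column $\mathbf{j}_1$ of $\mathbf{J}$, i.e. the leading eigenvector of $\bar{\mathbf{S}}$. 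First I would invoke Assumption~\ref{Assumption1} and the Perron--Frobenius theorem to conclude that $\mathbf{j}_1$ has strictly positive entries; since $\bar{\mathbf{S}}$ is a fixed $K\times K$ irreducible matrix, all $K$ entries of $\mathbf{j}_1$ are bounded above (by $\|\mathbf{j}_1\|=1$) and below by absolute constants, so $\mathbf{J}_{\bar{l_i}}(1)\asymp 1$. Combining this with $\|\boldsymbol{\theta}^{(l_i)}\|\asymp\|\boldsymbol{\theta}\|$, which follows from Assumption~\ref{Assumption3} because the $K$ community norms are comparable and $K$ is fixed, yields $|\mathbf{u}_1(i)|\asymp\theta_i/\|\boldsymbol{\theta}\|$ for every $i$.

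The first claim then follows directly. For $i\in\mathcal{V}_U$ the defining bound $|\hat{\mathbf{u}}_1(i)/(C_U\mathbf{u}_1(i))-1|\le C<1$ confines the ratio to the interval $[1-C,1+C]$, whence $|\hat{\mathbf{u}}_1(i)|\asymp|C_U\mathbf{u}_1(i)|=|\mathbf{u}_1(i)|$; chaining with the magnitude estimate of the previous paragraph gives $|\hat{\mathbf{u}}_1(i)|\asymp|C_U\mathbf{u}_1(i)|\asymp\theta_i/\|\boldsymbol{\theta}\|$, as asserted.

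For the cardinality bound I would argue by a counting-through-Frobenius-norm comparison. A node $i\notin\mathcal{V}_U$ satisfies, by the same unfolding, $|\hat{\mathbf{u}}_1(i)-C_U\mathbf{u}_1(i)|>C\,|C_U\mathbf{u}_1(i)|=C\,|\mathbf{u}_1(i)|$, and the magnitude estimate gives $|\mathbf{u}_1(i)|\gtrsim\theta_{\min}/\|\boldsymbol{\theta}\|$. Squaring and summing over $i\in\mathcal{V}\setminus\mathcal{V}_U$, then bounding the left-hand sum by the global quantity $\|\hat{\mathbf{u}}_1-C_U\mathbf{u}_1\|^2$, produces $|\mathcal{V}\setminus\mathcal{V}_U|\cdot(\theta_{\min}^2/\|\boldsymbol{\theta}\|^2)\lesssim\|\hat{\mathbf{u}}_1-C_U\mathbf{u}_1\|^2$. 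Finally I would substitute the eigenvector perturbation bound from Lemma~\ref{Proposition2}, which controls $\|\hat{\mathbf{u}}_1-C_U\mathbf{u}_1\|$ by $\sqrt{\log(n)Z}/\|\boldsymbol{\theta}\|^2$ times the ratio of the maximal signal to the aggregated minimal signal and holds with probability at least $1-o(n^{-4})$, and rearrange to obtain the stated bound on $|\mathcal{V}\setminus\mathcal{V}_U|$.

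The main obstacle is the lower bound $\mathbf{J}_{\bar{l_i}}(1)\asymp 1$ in the first step: the entire argument hinges on the entries of $\mathbf{u}_1$ being bounded away from zero at the precise scale $\theta_i/\|\boldsymbol{\theta}\|$, and this positivity with uniform constants is exactly what Perron--Frobenius under the irreducibility Assumption~\ref{Assumption1} delivers; without it the ratios $\hat{\mathbf{u}}_{k+1}(i)/\hat{\mathbf{u}}_1(i)$ in the SCORE normalization could blow up and the set $\mathcal{V}_U$ would be uncontrolled. Once this magnitude estimate and Lemma~\ref{Proposition2} are in hand, the ratio sandwiching and the node-counting are routine; the only care required is in tracking the powers of $\|\boldsymbol{\theta}\|$ and the appearance of $\theta_{\min}$ when converting the global Frobenius-norm estimate into a bound on the number of discarded nodes.
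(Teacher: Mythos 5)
Your proposal follows the paper's own proof essentially step for step: the same magnitude estimate $|\mathbf{u}_1(i)| \asymp \theta_i/\|\boldsymbol{\theta}\|$ obtained from Proposition~\ref{Proposition1} together with a Perron--Frobenius lower bound on the entries of $\mathbf{j}_1$ (which the paper gets via irreducibility and nonnegativity of $\bar{\mathbf{S}}$ under Assumptions~\ref{Assumption1} and~\ref{Assumption3}), the same sandwiching of $|\hat{\mathbf{u}}_1(i)|$ on $\mathcal{V}_U$, and the same counting argument that converts the global perturbation bound of Lemma~\ref{Proposition2} into a bound on $|\mathcal{V}\backslash\mathcal{V}_U|$. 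The only cosmetic difference is that you sum the absolute deviations $|\hat{\mathbf{u}}_1(i)-C_U\mathbf{u}_1(i)|^2$ with the per-node lower bound $C^2\theta_{\min}^2/\|\boldsymbol{\theta}\|^2$, while the paper sums the normalized ratios $\bigl(\hat{\mathbf{u}}_1(i)/(C_U\mathbf{u}_1(i))-1\bigr)^2$ each exceeding $C^2$; these are the same computation up to the rescaling factor $\|\boldsymbol{\theta}\|^2/\theta_{\min}^2$, and your power bookkeeping in $\|\boldsymbol{\theta}\|$ and $\theta_{\min}$ coincides with the paper's.
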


\begin{proof}
For the first part, 
{we first show that the elements in the leading eigenvector of $\mathbf{U}$ are all positive. To see this, we only need to show $C \leqslant \mathbf{j}_1(i) \leqslant 1$ for a constant $C > 0$. Then by the Expression \eqref{proof_1:equ3}, we can get the desired results. Note that $\mathbf{j}_1$ is the leading eigenvector of $\bar{\mathbf{S}}$, based on Lemma 7 in \cite{wang2020spectral}, what we need to show is that $\bar{\mathbf{S}}$ is irreducible and non-negative. By definition,
$$
\begin{aligned}
    \bar{\mathbf{S}} =& \sum_{q=1}^Q(\mathbf{S}^{(q)} \mathbf{S}^{(q)\top} ) +(\| \boldsymbol{\theta}\|^2/\|\boldsymbol{\delta} \|^2) \mathbf{S}^{(0)} \mathbf{S}^{(0)\top}\\
    =& \boldsymbol{\Psi}_{\boldsymbol{\theta}}\sum_{q=1}^Q(\mathbf{F}^{(q)}\mathbf{F}^{(q)\top} ) \boldsymbol{\Psi}_{\boldsymbol{\theta}} + (\| \boldsymbol{\theta}\|^2/\|\boldsymbol{\delta} \|^2)  \boldsymbol{\Psi}_{\boldsymbol{\theta}}(\mathbf{E}\mathbf{E}^{\top} ) \boldsymbol{\Psi}_{\boldsymbol{\theta}}.
\end{aligned}
$$
By Assumption~\ref{Assumption1}, all terms on the right hand side are irreducible, thus $\bar{\mathbf{S}}$ is also irreducible. Moreover, by noting $\mathbf{F}^{(q)}(i,j) \ge 0$ and $\mathbf{E}(i_1, i_2) \ge 0$, we can show that $\bar{\mathbf{S}}$ is nonnegative. Thus, we conclude the result.}

Now we show the first part, recall that $\mathbf{U}_{\bar{i}}={({\theta}_i}/{\|\boldsymbol{\theta}^{\left(l_i\right)}\|}) \mathbf{J}_{\bar{l_i}}$ as shown in Proposition~\ref{Proposition1}, we can obtain $\left|C_U \mathbf{u}_1(i)\right| = \left|C_U {({\theta}_i}/{\|\boldsymbol{\theta}^{(l_i)}\|}) \mathbf{j}_1\left(l_i\right)\right|$ for $ 1 \leqslant i \leqslant n$.
Combine $\|\boldsymbol{\theta}\|^2=\sum_{k=1}^K\|\boldsymbol{\theta}^{(k)}\|^2$ and Assumption~\ref{Assumption3}, we can get $\|\boldsymbol{\theta}^{(i)}\| \asymp\|\boldsymbol{\theta}\|$ for $1 \leqslant i\leqslant K$. Noting that $0 < C \leqslant \mathbf{J}_1(i) \leqslant 1$ for $1 \leqslant i\leqslant n$, which is proved above. The following equation holds,
\begin{equation}\label{Pro_6:App_5}
\left|C_U \mathbf{u}_1(i)\right| = \left|C_U \frac{{\theta}_i}{\|\boldsymbol{\theta}^{(l_i)}\|} \mathbf{j}_1\left(l_i\right)\right| \asymp\left|\frac{{\theta}_i}{\|\boldsymbol{\theta}\|}\right| \quad \text { for } \quad 1 \leqslant i \leqslant n.
\end{equation}

Then, for $i \in \mathcal{V}_U$, we have $\left|\frac{\hat{\mathbf{u}}_1(i)}{C_U \mathbf{u}_1(i)}-1\right| \leqslant C <1$, thus $\left|\hat{\mathbf{u}}_1(i)\right| \asymp\left|C_U \mathbf{u}_1(i)\right|$, where $\left|C_U\right|=1$ by Proposition~\ref{Proposition1}. Substituting to Equation \eqref{Pro_6:App_5}, we obtain
\begin{equation}\label{Pro_6:App_6}
\left|\hat{\mathbf{u}}_1(i)\right| \asymp \left|C_U \mathbf{u}_1(i)\right| \asymp\left|\frac{{\theta}_i}{\|\boldsymbol{\theta}\|}\right|, \quad \text { for } \quad i \in \mathcal{V}_U.
\end{equation}

Furthermore, by Equation \eqref{Pro_6:App_6}, we can obtain $\left|C_U \mathbf{u}_1(i)\right| \asymp|{{\theta}_i}/{\|\boldsymbol{\theta}\|} >0 $ for $1 \leqslant i \leqslant n$, which means that $C_U \mathbf{u}_1(i)$ can be used as a denominator. Thus, we have
\begin{equation}\label{Pro_6:App_8}
\begin{aligned}
&\sum_{i \in \mathcal{V} \backslash \mathcal{V}_U }\left(\frac{\hat{\mathbf{u}}_1(i)}{C_U \mathbf{u}_1(i)}-1\right)^2=\sum_{i \in \mathcal{V} \backslash \mathcal{V}_U}\left(\frac{1}{C_U \mathbf{u}_1(i)}\right)^2\left(\hat{\mathbf{u}}_1(i)-C_U \mathbf{u}_1(i)\right)^2 \\
 \leqslant& \sum_{i \in \mathcal{V} \backslash \mathcal{V}_U } \frac{\|\boldsymbol{\theta}\|^2}{\theta_{\min}^2}\left(\hat{\mathbf{u}}_1(i)-C_U \mathbf{u}_1(i)\right)^2 \quad \text{(by Equation \eqref{Pro_6:App_5})}\\
 \leqslant &\sum_{i=1}^n \frac{\|\boldsymbol{\theta}\|^2}{\theta_{\min}^2}\left(\hat{\mathbf{u}}_1(i)-C_U \mathbf{u}_1(i)\right)^2 \leqslant \frac{\|\boldsymbol{\theta}\|^2}{\theta_{\min}^2}\left\|\hat{\mathbf{u}}_1-\mathbf{u}_1 C_U\right\|^2 \\
 \leqslant &\frac{\log (n) Z }{\theta_{\min}^2\left\|\boldsymbol{\theta}\right\|^4}\left( \frac{\max\left\{ \max_{q=1}^Q\sigma_{\max }\left(\mathbf{F}^{(q)}\right), \lambda_{\max}(\mathbf{E}) \right\}}{\sum_{q =1}^Q \sigma^2_{\min }\left(\mathbf{F}^{(q)}\right) + \lambda^2_{\min }\left(\mathbf{E}\right)} \right)^2 \quad \text{(by Proposition~\ref{Proposition2})}.
\end{aligned}
\end{equation}

Therefore, there exists a constant $C$, the nodes in $\mathcal{V} \backslash \mathcal{V}_U$ satisfy $\left(\frac{\hat{\mathbf{u}}_1(i)}{C_U \mathbf{u}_1(i)}-1\right)^2>C^2$. By Equation \eqref{Pro_6:App_8}, we have
$$
\begin{aligned}
 \left|\mathcal{V} \backslash \mathcal{V}_U\right|=&\sum_{i \in \mathcal{V} \backslash \mathcal{V}_U} 1 \leqslant \sum_{\mathcal{V} \backslash \mathcal{V}_U} \frac{1}{C^2}\left(\frac{\hat{\mathbf{u}}_1(i)}{C_U \mathbf{u}_1(i)}-1\right)^2\\
 \lesssim& \frac{\log (n) Z }{\theta_{\min}^2\left\|\boldsymbol{\theta}\right\|^4}\left( \frac{\max\left\{ \max_{q=1}^Q\sigma_{\max }\left(\mathbf{F}^{(q)}\right), \lambda_{\max}(\mathbf{E}) \right\}}{\sum_{q =1}^Q \sigma^2_{\min }\left(\mathbf{F}^{(q)}\right) + \lambda^2_{\min }\left(\mathbf{E}\right)} \right)^2.   
\end{aligned}
$$
\end{proof}

The following inequality is taken from Appendix A.7.5 in \cite{wang2020spectral}. 
\begin{lemma}\label{Lemma4}
For $\mathbf{v}, \mathbf{u} \in \mathbf{R}^n, a, b \in \mathbf{R}, a>0, b>0$, the following inequality holds, $\left\|\frac{\mathbf{v}}{a}-\frac{\mathbf{u}}{b}\right\|^2 \leqslant 2\left(\frac{1}{a^2}\|\mathbf{v}-\mathbf{u}\|^2+\frac{(b-a)^2}{(a b)^2}\|\mathbf{u}\|^2\right)$.
\end{lemma}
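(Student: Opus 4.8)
The plan is to prove this elementary inequality by a single algebraic decomposition followed by the standard quadratic bound $\|\mathbf{x}+\mathbf{y}\|^2 \leqslant 2\|\mathbf{x}\|^2 + 2\|\mathbf{y}\|^2$. The guiding observation is that the right-hand side of the claimed inequality splits the error into a ``numerator'' term weighted by $1/a^2$ and a ``denominator'' term weighted by $(b-a)^2/(ab)^2$, which suggests inserting the intermediate vector $\mathbf{u}/a$ between $\mathbf{v}/a$ and $\mathbf{u}/b$.

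First I would write the exact identity
$$
\frac{\mathbf{v}}{a}-\frac{\mathbf{u}}{b} = \left(\frac{\mathbf{v}}{a}-\frac{\mathbf{u}}{a}\right) + \left(\frac{\mathbf{u}}{a}-\frac{\mathbf{u}}{b}\right) = \frac{\mathbf{v}-\mathbf{u}}{a} + \left(\frac{1}{a}-\frac{1}{b}\right)\mathbf{u}.
$$
Using $a,b>0$, the scalar coefficient simplifies to $\tfrac{1}{a}-\tfrac{1}{b}=\tfrac{b-a}{ab}$, so the second summand is exactly $\tfrac{b-a}{ab}\mathbf{u}$. This matches the two pieces appearing on the right-hand side of the target bound, which is why this particular splitting point (rather than inserting $\mathbf{v}/b$) is the natural choice.

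Next I would take squared Euclidean norms and apply the elementary inequality $\|\mathbf{x}+\mathbf{y}\|^2 \leqslant 2\|\mathbf{x}\|^2+2\|\mathbf{y}\|^2$, which follows from the triangle inequality together with $2\|\mathbf{x}\|\,\|\mathbf{y}\|\leqslant \|\mathbf{x}\|^2+\|\mathbf{y}\|^2$. With $\mathbf{x}=\tfrac{\mathbf{v}-\mathbf{u}}{a}$ and $\mathbf{y}=\tfrac{b-a}{ab}\mathbf{u}$ this yields
$$
\left\|\frac{\mathbf{v}}{a}-\frac{\mathbf{u}}{b}\right\|^2 \leqslant 2\left\|\frac{\mathbf{v}-\mathbf{u}}{a}\right\|^2 + 2\left\|\frac{b-a}{ab}\,\mathbf{u}\right\|^2 = 2\left(\frac{1}{a^2}\|\mathbf{v}-\mathbf{u}\|^2 + \frac{(b-a)^2}{(ab)^2}\|\mathbf{u}\|^2\right),
$$
where I pull the positive scalars $1/a^2$ and $(b-a)^2/(ab)^2$ out of the norms. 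This is precisely the asserted inequality, so the proof terminates here.

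There is no genuine obstacle in this lemma; the only point requiring any thought is the choice of intermediate term, since inserting $\mathbf{u}/a$ (as opposed to $\mathbf{v}/b$ or a symmetric midpoint) is what produces the exact asymmetric weighting $1/a^2$ versus $(b-a)^2/(ab)^2$ demanded by the statement. The factor $2$ on the right is an artifact of the quadratic bound and is harmless for the downstream application in Proposition~\ref{Proposition3}, where this lemma is invoked row-by-row with $a=C_U\mathbf{u}_1(i)$ and $b=\hat{\mathbf{u}}_1(i)$.
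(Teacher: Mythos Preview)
Your proof is correct; the decomposition $\tfrac{\mathbf{v}}{a}-\tfrac{\mathbf{u}}{b}=\tfrac{\mathbf{v}-\mathbf{u}}{a}+\tfrac{b-a}{ab}\mathbf{u}$ followed by $\|\mathbf{x}+\mathbf{y}\|^2\leqslant 2\|\mathbf{x}\|^2+2\|\mathbf{y}\|^2$ is exactly the intended argument. The paper itself does not supply a proof but simply cites Appendix~A.7.5 of \cite{wang2020spectral}, so your write-up in fact fills in what the paper leaves as a reference. One minor remark on your closing comment: in the paper's application \eqref{App_3:equ_14} the roles are $a=\hat{\mathbf{u}}_1(i)$ and $b=C_U\mathbf{u}_1(i)$, not the other way around, though this does not affect the lemma or its proof.
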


Next, we turn to prove the main result of Proposition~\ref{Proposition3}. 
By the definition of $\mathbf{R}$, we have 
\begin{equation}\label{App_3:equ_11}
\begin{aligned}
\left\|\left(\mathbf{R}\right)_{\bar{i}}\right\|^2= & \left\|\frac{\left(\mathbf{U}_{2 \sim K}\mathbf{O}_{\mathbf{U}}\right)_{\bar{i}}}{C_U\mathbf{u}_1(i)}\right\|^2  \leqslant \frac{ \frac{{\theta}^{2}_{i}}{\|\boldsymbol{\theta}\|^2}}{(\mathbf{u}_1(i))^2} \lesssim \frac{\frac{{\theta}^{2}_{i}}{(\boldsymbol{\theta})^2}}{\frac{{\theta}^{2}_{i}}{\|\boldsymbol{\theta}^{(l_i)}\|^2}(\mathbf{j}_1\left(l_i\right))^2} \leqslant C,
\end{aligned}
\end{equation}
where the first inequality is from $\left\| \left(\mathbf{U}_{2 \sim K}\mathbf{O}_{\mathbf{U}}\right)_{\bar{i}} \right\| =  \left\| \left(\mathbf{U}_{2 \sim K}\right)_{\bar{i}} \right\| \le  \left\| \mathbf{U}_{\bar{i}} \right\| \le \frac{{\theta}^{2}_{i}}{\|\boldsymbol{\theta}\|^2}$, the second inequality is from  \eqref{Pro_1:equ2}, and  the last inequality is from $0 < C^{-1/2} \le \mathbf{j}_1\left(l_i\right)$. Next, dividing the nodes into two distinct sets $\mathcal{V}_U$ and $\mathcal{V} \backslash \mathcal{V}_U$, we can see
\begin{equation}\label{App_3:equ_12}
\|\hat{\mathbf{R}}-\hat{\mathbf{R}}\|_F^2=\sum_{i \in \mathcal{V} \backslash \mathcal{V}_U}\|\hat{\mathbf{R}}_{\bar{i}}-{\mathbf{R}}_{\bar{i}}\|^2+\sum_{i \in \mathcal{V}_U}\|\hat{\mathbf{R}}_{\bar{i}}-{\mathbf{R}}_{\bar{i}}\|^2.
\end{equation}
We bound two terms on the right hand side separately.
For the first term in Equation \eqref{App_3:equ_12}, we can show
\begin{equation}\label{App_3:equ_13}
\begin{aligned}
&\sum_{i \in \mathcal{V} \backslash \mathcal{V}_U}\|\hat{\mathbf{R}}_{\bar{i}}-{\mathbf{R}}_{\bar{i}}\|^2 \leqslant C \sum_{i \in \mathcal{V} \backslash \mathcal{V}_U} \left(\|\hat{\mathbf{R}}_{\bar{i}}\|^2+\|\mathbf{R}_{\bar{i}}\|^2\right)\\
 \leqslant &C \sum_{i \in \mathcal{V} \backslash \mathcal{V}_U} \left(K T_n^2+C\right) \quad \text{(by the definition of $\hat{\mathbf{R}}$ and \eqref{App_3:equ_11})}\\
 \leqslant& C \left|\mathcal{V} \backslash \mathcal{V}_U \right| T_n^2 \\
 \lesssim& \frac{\log (n) Z  T_n^2}{\theta_{\min}^2\left\|\boldsymbol{\theta}\right\|^4}\left( \frac{\max\left\{ \max_{q=1}^Q\sigma_{\max }\left(\mathbf{F}^{(q)}\right), \lambda_{\max}(\mathbf{E}) \right\}}{\sum_{q =1}^Q \sigma^2_{\min }\left(\mathbf{F}^{(q)}\right) + \lambda^2_{\min }\left(\mathbf{E}\right)} \right)^2 \quad \text{(by Lemma~\ref{Lemma3}).}
\end{aligned}
\end{equation}

For the second term in Equation~\ref{App_3:equ_12}, we can get that
\begin{equation}\label{App_3:equ_14}
\begin{aligned}
& \sum_{i \in \mathcal{V}_U}\left\|\hat{\mathbf{R}}_{\bar{i}}-\mathbf{R}_{\bar{i}}\right\|^2 \\
 \leqslant &C  \sum_{i \in \mathcal{V}_U} \left\|\frac{\left(\hat{\mathbf{U}}_{2 \sim K}\right)_{\bar{i}}}{\hat{\mathbf{u}}_1(i)}-\frac{\left(\mathbf{U}_{2 \sim K} \mathbf{O}_{\mathbf{U}^{\prime}}\right)_{\bar{i}}}{C_U \mathbf{u}_1(i)} \right\|^2 \quad \text{(by the definition of $\hat{\mathbf{R}}$, ${\mathbf{R}}$ and Lemma~\ref{Lemma4})}\\
\leqslant&  C \sum_{i \in \mathcal{V}_U}\left(\frac{1}{\left(\hat{\mathbf{u}}_1(i)\right)^2}\left\|\left(\hat{\mathbf{U}}_{2 \sim K}\right)_{\bar{i}}-\left(\mathbf{U}_{2 \sim K} \mathbf{O}_{\mathbf{U}^{\prime}}\right)_{\bar{i}}\right\|^2+\frac{\left(C_V \mathbf{u}_1(i)-\hat{\mathbf{u}}_1(i)\right)^2}{\left(\hat{\mathbf{u}}_1(i) C_U \mathbf{u}_1(i)\right)^2}\left\|\left(\mathbf{U}_{2 \sim K} \mathbf{O}_{\mathbf{U}^{\prime}}\right)_{\bar{i}}\right\|^2\right) \\
 \leqslant& C \sum_{i \in \mathcal{V}_U}\left(\frac{\|\boldsymbol{\theta}\|^2}{\boldsymbol{\theta}_i^2}\left\|\left(\hat{\mathbf{U}}_{2 \sim K}\right)_{\bar{i}}-\left(\mathbf{U}_{2 \sim K} \mathbf{O}_{\mathbf{U}^{\prime}}\right)_{\bar{i}}\right\|^2+\frac{\|\boldsymbol{\theta}\|^2}{\boldsymbol{\theta}_i^2}\left(C_U \mathbf{u}_1(i)-\hat{\mathbf{u}}_1(i)\right)^2\right) \quad \text{(by Lemma~\ref{Lemma3})}\\
\leqslant & C \frac{\|\boldsymbol{\theta}\|^2}{\theta_{\min}^2} \left(\sum_{i \in \mathcal{V}_U}\left\|\left(\hat{\mathbf{U}}_{2 \sim K}\right)_{\bar{i}}-\left(\mathbf{U}_{2 \sim K} \mathbf{O}_{\mathbf{U}^{\prime}}\right)_{\bar{i}}\right\|^2+\sum_{i \in \mathcal{V}_U}\left(C_U \mathbf{u}_1(i)-\hat{\mathbf{u}}_1(i)\right)^2\right) \\
 \leqslant &C \frac{\|\boldsymbol{\theta}\|^2}{\theta_{\min}^2}\left(\left\|\hat{\mathbf{U}}_{2 \sim K}-\mathbf{U}_{2 \sim K} \mathbf{O}_{\mathbf{U}^{\prime}}\right\|_F^2+\left\|\hat{\mathbf{u}}_1-\mathbf{u}_1 C_U\right\|_F^2\right) \\
\leqslant& \frac{\log (n) Z }{\theta_{\min}^2\left\|\boldsymbol{\theta}\right\|^4}\left( \frac{\max\left\{ \max_{q=1}^Q\sigma_{\max }\left(\mathbf{F}^{(q)}\right), \lambda_{\max}(\mathbf{E}) \right\}}{\sum_{q =1}^Q \sigma^2_{\min }\left(\mathbf{F}^{(q)}\right) + \lambda^2_{\min }\left(\mathbf{E}\right)} \right)^2 \quad \text{(by Proposition~\ref{Proposition2})}. 
\end{aligned}
\end{equation}

Combining Equation \eqref{App_3:equ_13} and Equation \eqref{App_3:equ_14}, we can obtain that 
$$
\|\hat{\mathbf{R}}-\mathbf{R}\|_F^2 \lesssim\frac{\log (n) Z  T_n^2}{\theta_{\min}^2\left\|\boldsymbol{\theta}\right\|^4}\left( \frac{\max\left\{ \max_{q=1}^Q\sigma_{\max }\left(\mathbf{F}^{(q)}\right), \lambda_{\max}(\mathbf{E}) \right\}}{\sum_{q =1}^Q \sigma^2_{\min }\left(\mathbf{F}^{(q)}\right) + \lambda^2_{\min }\left(\mathbf{E}\right)} \right)^2 ,
$$ 
where $Z = \max \left(\theta_{\max }, \delta_{\max}\right) \max \left(\|\boldsymbol{\theta}\|_1,\|\boldsymbol{\delta}\|_1\right)$, and $T_n = \log(n)$.

\vskip 0.2in
\bibliography{ref}

\end{document}